\newtheorem{theorem}{Theorem}
\newtheorem{lemma}{Lemma}
\newtheorem{cor}{Corollary}
\theoremstyle{definition}
\newtheorem{defn}{Definition}
\theoremstyle{remark}
\newcommand{\End}{\text{End}}
\newcommand{\defref}[1]{\hyperref[#1]{Definition \ref*{#1}}}
\newcommand{\Defref}[1]{\hyperref[#1]{Definition \ref*{#1}}}
\newcommand{\lemref}[1]{\hyperref[#1]{Lemma \ref*{#1}}}
\newcommand{\Lemref}[1]{\hyperref[#1]{Lemma \ref*{#1}}}
\newcommand{\thmref}[1]{\hyperref[#1]{Theorem \ref*{#1}}}
\newcommand{\Thmref}[1]{\hyperref[#1]{Theorem \ref*{#1}}}
\newcommand{\corref}[1]{\hyperref[#1]{Corollary \ref*{#1}}}
\newcommand{\Corref}[1]{\hyperref[#1]{Corollary \ref*{#1}}}
 \tikzset{
          solid node/.style={circle,draw,inner sep=1.2,fill=black},
          green node/.style={circle,draw,inner sep=1.2,fill=green, draw=green},
          red node/.style={circle,draw,inner sep=1.2,fill=red, draw=red},
          blue node/.style={circle,draw,inner sep=1.2,fill=blue, draw=blue},
          yellow node/.style={circle,draw,inner sep=1.2,fill=yellow, draw=yellow},
          cyan node/.style={circle,draw,inner sep=1.2,fill=cyan, draw=cyan},
          hollow node/.style={circle,draw,inner sep=1.2},
          big node/.style={elipse, draw=green!60, fill=green!5},
          big solid node/.style={circle,draw,inner sep=1.2,fill=black, minimum size=0.3cm},
          big green node/.style={circle,draw,inner sep=1.2,fill=green, draw=green, minimum size=0.3cm},
          big red node/.style={circle,draw,inner sep=1.2,fill=red, draw=red, minimum size=0.3cm},
          big blue node/.style={circle,draw,inner sep=1.2,fill=blue, draw=blue, minimum size=0.3cm},
          big yellow node/.style={circle,draw,inner sep=1.2,fill=yellow, draw=yellow, minimum size=0.3cm},
          big cyan node/.style={circle,draw,inner sep=1.2,fill=cyan, draw=cyan, minimum size=0.3cm},
          left label/.style={above left,midway},
          right label/.style={above right,midway}
        }
\title{Embedding Higman-Thompson groups of unfolding trees into the Leavitt path algebras}
\author{Roman Gorazd\\ \url{roman.gorazd@gmail.com}}
\affil{University of Newcastle}
\date{February 2025}
\begin{document}

    \maketitle
    \begin{abstract}
        The isomorphism problem of regular Higman-Thompson groups was solved in \cite{PARDO2011172}, via embedding it into the Leavitt algebra. In this paper, we will expand these results to embed the Higman-Thompson groups of unfolding trees of directed graphs (also known as full groups of one-sided shifts) into the Leavitt path algebra. This embedding allows us to show that any isomorphism of rooted Leavitt path algebras over $\mathbb{Z}$ induces an isomorphism between Higman-Thompson groups.
    \end{abstract}
    
    \section{Introduction}  
    In this paper, we will explore the connection between the Leavitt path algebra of a directed graph and the Higman-Thompson group of the unfolding tree of that same graph, these groups are better known as the full groups of the one-sided shifts as discussed in \cite{matui2013}. This class of groups is a generalization of Higman-Thompson groups by looking at the automorphism of the boundary of a path space of a graph (which is usually a cantor space) that are locally shifts. The classical case considered by Higman\cite{higman74} is equivalent to the graph being a source with $k$ edged pointing to a vertex with $k$ loops adjacent to it. The class is closed under direct products. This paper will expand the methodology used in \cite{PARDO2011172} in order to solve the isomorphism problem of classical Higman-Thompson groups. In that paper, embedding the original Higman-Thompson group\cite{higman74} into the Leavitt algebra was used together with results about isomorphism of Leavitt path algebras from \cite{abrams07} to show a necessary condition for two Higman-Thompson groups of quasi regular trees to be isomorphic. The converse statement, that this condition is sufficient, was already shown by Higman in \cite{higman74}. While extending the methodology of \cite{PARDO2011172} to Higman-Thompson groups of unfolding trees allows us to show that isomorphism of certain subalgebras of Leavitt path algebras is sufficient for the Higman-Thompson groups to be isomorphic, it cannot be determined the same way if that condition is necessary.
    
    We will be looking at the $*$-subalgebra of the Leavitt path algebra generated by the paths that start at a fixed root of the graph. We will call it the rooted Leavitt path algebra.
    Using the notation from \cite{scott84} we can look at cofinite spaces and their bases to describe representatives of Higman-Thompson automorphisms. Using these bases, we can also describe the unitary elements of the Leavitt path algebra, based on $\mathds{Z}$, $U(L_{\mathds{Z}}(G,R))$. This allows us to embed the Higman-Thompson groups into the unitary elements as in \lemref{ch4-lem6}, by defining a canonical Leavitt path algebra element for any Higman-Thompson representative and showing that they are equal if and only if the representatives produce the same Higman-Thompson automorphism. However, the image of this embedding does not have to be fixed under automorphisms of the Leavitt algebra (or its unitary group). In order to show that an isomorphism between rooted Leavitt path algebras induces an isomorphism between the Higman-Thompson groups, we will look at the kernel of a left inverse of this embedding. This kernel: $DU(L_{\mathds{Z}}(G,R))$ can be seen as an analogue of the diagonal group of matrices. Using the group of symmetric elements, we can describe this kernel as a subset of the rooted Leavitt path algebra that is defined by a first order sentence. So any automorphism will preserve $DU(L_{\mathds{Z}}(G,R))$ and $U(L_{\mathds{Z}}(G,R))$, and thus it also preserves their factor, which is isomorphic to the Higman-Thompson group. Thus any isomorphism between rooted Leavitt path algebras will induce an isomorphism between Higman-Thompson groups. Note that the same does not follow for isomorphisms of unitary groups, since the characterization of $DU(L_{\mathds{Z}}(G,R))$ relies on the symmetric elements.
    
    Lastly, we will apply our results to show one way of reducing a rooted graph that preserves the Higman-Thompson group. The same result can be shown more directly, however using the results of the paper shortens the proof considerably.

    \section{Definitions}
    In this paper, the graphs will be directed with multiple edges. Formally, graphs are $4$-tuples $G=(VG,EG,o,t)$, where $VG$ and $EG$ are the vertex and edge sets respectively and $o,t:EG\to VG$ are the origin and terminus functions. We will also assume that all graphs throughout this paper are locally finite, i.e. \(\forall v\in VG,\, |o^{-1}(v)|<\infty\).  
    We can define the set of paths in $G$ to be
        \[\mathcal{W}(G):=\{e_1e_2\dots e_n\in EG^*\mid \forall i<n\ e_{i-1}=e_i\}\cup \{\epsilon_v\mid v\in VG\}, \] 
    where $\epsilon_v$ denotes the empty path based on $v$. We will denote $\mathcal{W}^{+}(G)$ to be the set of non-empty paths. For any path $p\in \mathcal{W}(G)$, we denote $|p|$ to be the length of it, additionally we set $O(e_1\dots e_n):=o(e_1),\ T(e_1\dots e_n):=t(e_n)$ to be the origin and terminus, respectively of $p=e_1\dots e_n$ (with $O(\epsilon_v):=v=:T(\epsilon_v)$ for any vertex $v$). We will write $\mathcal{W}(G,v)$ to be the set of all paths originating in $v$. For any graph $G$ and any vertex $v$, we will denote by $G_v$ the subgraph consisting of the vertices and edges that can be reached by a directed path starting at $v$ (and all edges that start and end at those vertices). A vertex $R$ such that $G_R=G$, is called a root. 
    For any such root, we can define the unfolding tree based on $R$ by setting
        \begin{align*}
            V\mathcal{T}(G,R):&=\mathcal{W}(G,R)\\
            E\mathcal{T}(G,R):&=\{ (p,pe)\in \mathcal{W}(G)^2\mid O(p)=O(pe)=R, e\in EG \}\\
            \forall (p,pe)\in E\mathcal{T}(G,S),\ &o_{\mathcal{T}(G,R)}( (p,pe) ):=p\text{ and } t_{\mathcal{T}(G,R)}( (p,pe) ):=pe
        \end{align*}
    We will also sort the vertices by the prefix order $\preceq$ (i.e. $p\preceq q\iff \exists r\in\mathcal{W}(G),\ q=pr$), this ordering makes $\mathcal{T}(G,R)$ into a meet-semilattice. As in \cite{scott84} we can define cofinite subspaces of $\mathcal{T}(G,R)$ to be subsets $\mathcal{S}\subseteq\mathcal{T}(G,R)$ such that:
        \begin{itemize}
            \item $|V\mathcal{T}(G,R)\setminus \mathcal{S}|<\infty$
            \item $\forall p\in \mathcal{S},\ q\in\ \mathcal{T}(G, R)\ (p\preceq q)\implies q\in \mathcal{S}$.
        \end{itemize}
    If we define any set of paths to be independent if none of the elements are prefixes of each other, we can see that any cofinite subspace has a unique finite independent subset $B\subseteq \mathcal{S}$, such that
        \[\forall p\in \mathcal{S},\exists b\in B,\ b\preceq p.\]
    We will call such a set the basis of $\mathcal{S}$. Conversely, for any inclusion-maximal finite independent set $B$, there is a unique cofinite subspace $\mathcal{S}(B)$, that has $B$ as a basis (consisting of all paths that have a prefix in $B$), so any inclusion-maximal independent finite set will be referred to as a basis.
    We will call an isomorphism between two cofinite subspaces $\phi:\mathcal{S}\to \mathcal{S}'$ an almost automorphism representative. We will call a representative, a Higman-Thompson representative if for the basis $B$ of its domain we have 
        \[\forall b\in  B,\forall p\in \mathcal{W}(G,T(b)),\ \phi(bp)=\phi(b)p.\]
    We will say that two representatives $\phi,\psi$ are equivalent if there is a cofinite subspace $\mathcal{S}$ such that $\phi|_{\mathcal{S}}=\psi|_{\mathcal{S}}$. The group of almost automorphisms consists of the equivalence classes of almost automorphism representatives (with the operation being the composition of compatible representatives). The Higman-Thompson group will be the subgroup of all equivalence classes that have a Higman-Thompson representative. We will denote this group as $\mathcal{HT}(\mathcal{T}(G,R))$. Note that this group is the same as the full group of finite one-sided shift groupoid on the space of rooted paths in $G$. These groups have been discussed in \cite{matui2013} and are important examples of profinite groups acting on a Cantor space. 
    
    We will define the Leavitt path algebra as in \cite{abrams2015leavitt}.
    \begin{defn}
        For any graph $G$ and any ring, $\mathcal{R}$ we can define the Leavitt path algebra $L_{\mathcal{R}}(G)$ to be the (a priori non-unital) algebra over $\mathcal{R}$ generated by the set:
            \[VG\sqcup\{e,e^*\mid e\in EG\}\]
        and the relations for each $e',e\in EG$ and any $v\in VG$:
        \begin{itemize}
            \item $e^*e'=   \begin{cases}
                                0, & e\neq e'\\
                                t(e), & e=e'
                            \end{cases}$
            \item $o(e)e=et(e)=e$
            \item $t(e)e^*=e^*o(e)=e^*$
            \item If $o^{-1}(v)\neq \emptyset$, $v=\sum_{e\in o^{-1}(v)} ee^*$
        \end{itemize}
    \end{defn}

    We can expand $*$ into a linear involution s.t. for any $x,y\in L_{\mathcal{R}}(G)\  (xy)^*=y^*x^*$. By setting $v^*:=v$, $(e^*)^*:=e$, for any $x_1,\dots x_n\in VG\cup EG\cup EG^*$
        \[(x_1x_2\dots x_n)^*:=x_n^*\dots x_2^*x_1^*\]
    and expanding $*$ linearly to all of $L_{\mathcal{R}}(G)$. This makes $L_{\mathcal{R}}(G)$ into an $*$-algebra.
    
     We will reinterpret each path $p\in \mathcal{W}(G)$ as an element in $L_{\mathcal{R}}(G)$ by looking at it as the product of its edges. For any path $p=e_1\dots e_n$, we will have $p^*=e^*_n\dots e^*_1$, we will call this a \textbf{ghost path}. If the path is empty, we will interpret $\varepsilon_v$ as $\varepsilon_v=v$, for any vertex $v$. 
     
    \section{Properties of the Leavitt path algebra}
    The following equalities hold in all Leavitt path algebras of finite graphs:
    \begin{itemize}\label{ch4-prop1}
        \item $\forall v,w\in VG,\ vw=  \begin{cases}
                                                0, & v\neq w\\
                                                v, & v=w
                                            \end{cases}$
        \item $\forall v\in VG, e\in EG,$ we have
        \[(e\notin t^{-1}(v)\implies ev=v^*e^*=0)\land (e\notin o^{-1}(v)\implies ve=e^*v^*=0)\]
        \item $\forall e,f\in EH,\ (t(e)\neq o(f))\implies(ef=f^*e^*=0)$
        \item If $VG$ is finite, $\forall x\in L_{\mathcal{R}}(G),\ x\big(\sum_{v\in VG}v\big)=\big(\sum_{v\in VG}v\big)x=x$
        \item $\forall p,q\in\mathcal{W}(G),\ q^*p= \begin{cases}
                                                        T(p),   & p=q\\ 
                                                        r,      & q\prec p\land p=qr\\
                                                        r^*,     & p\prec q\land q=pr\\
                                                        0,      & \text{ otherwise }
                                                    \end{cases}$
        \item $\forall p,q\in\mathcal{W}(G),\ T(p)\neq T(q)\implies pq^*=0$
    \end{itemize}
    We can see from these identities that the set $\{pq^*\mid p,q\in \mathcal{W}(G),\ T(p)=T(q)\}$ generates the Leavitt path algebra as a $\mathcal{R}$-module.
    
    We introduce a standing assumption that if we write an element of the Leavitt path algebra as:
        \[x=\sum_{m\in M,n\in N} k_{m,n}mn^*\]
    for any finite $M,N\subseteq\mathcal{W}(G)$ and $k_{m,n}\in \mathcal{R}$, we assume that for $m,n$ with $T(m)\neq T(n)$, $k_{m,n}=0$. This can be done, since in that case $mn^*=0$ (which can be seen from the above identities).
    In order to identify when a sum in this algebra is non-trivial, we will introduce a homomorphism $\pi$ of $L_{\mathcal{R}}(G)$ on the free $\mathcal{R}$-module $\mathcal{M}$ generated by the set of symbols $\{ X_p\mid p\in\mathcal{W}(G)\}$. This will generalise the construction from \cite{BROWNLOWE2016}. We will define the $\mathcal{M}$-endomorphisms $\Lambda_e,\Lambda^*_e,\Lambda_v$ for each $e\in EG, v\in VG$, by setting for each $p\in\mathcal{W}(G)$:
    \begin{itemize}
        \item $\Lambda_e(X_p)= \begin{cases}
                        X_{ep}, & t(e)=O(p)\\
                        0,  & \text{otherwise}
                    \end{cases}$
        \item $\Lambda^*_e(X_p)= \begin{cases}
                        X_q, & \exists q\in \mathcal{W}(G),\ p=eq\\
                        0,  & \text{otherwise}
                    \end{cases}$
        \item $\Lambda_v(X_p)=    \begin{cases}
                        X_p, & O(p)=v\\
                        0,  & \text{otherwise}
                    \end{cases}$.
    \end{itemize}
    We can see that these functions satisfy the following, for any edges $e,e'$ and any vertex $v$:
    \begin{itemize}
        \item $\Lambda^*_e\circ \Lambda_{e'}=   \begin{cases}
                                                    0, & e\neq e'\\
                                                    \Lambda_{t(e)}, & e=e'
                                                \end{cases}$
            \item $\Lambda_{o(e)}\circ \Lambda_e=\Lambda_e\circ \Lambda_{t(e)}=\Lambda_e$
            \item $\Lambda_{t(e)}\circ \Lambda^*_e=\Lambda^*_e\circ \Lambda_{o(e)}=\Lambda^*_e$
            \item If $o^{-1}(v)\neq \emptyset$, $\Lambda_v=\sum_{e\in o^{-1}(v)} \Lambda_e\circ \Lambda^*_e$.
    \end{itemize}
Thus, we can construct a homomorphism $\pi:L_{\mathcal{R}}(G)\to \End(M)$ by setting: $\pi(e):=\Lambda_e$, $\pi(e^*):=\Lambda^*_e$ and $\pi(v):=\Lambda_v$, and expanding $\pi$ to the whole algebra.  Since $\pi$ is a homomorphism, we have for each path $p$: $\pi(p)=\Lambda_p$ and $\pi(p^*)=\Lambda^*_p$, which are defined as:
        \[\forall q\in \mathcal{W}(G),\  
        \Lambda_p(X_q)=\begin{cases}
                        X_{pq}, & T(p)=O(q)\\
                        0,      & \text{otherwise}
                    \end{cases}\text{ and }\ 
        \Lambda^*_p(X_q)=\begin{cases}
                        X_{r}, & q=pr\\
                        0,      & \text{otherwise}
                    \end{cases}.\]
    This homomorphism allows us to show a slightly stronger version of \cite[Proposition 4.9.]{TOMFORDE2011471}.
    \begin{lemma}\label{ch4-lem1.5}
        The set $\{p\mid p\in \mathcal{W}(G)\}\cup\{p^*\mid p\in \mathcal{W}(G)\}$ is linearly independent in $L_{R}(G)$.
    \end{lemma}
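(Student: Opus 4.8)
The plan is to transport the relation into the endomorphism algebra $\End(\mathcal{M})$ via the homomorphism $\pi$ and then exploit the length grading of $\mathcal{M}$. Grade the module by $\mathcal{M}=\bigoplus_{n\ge 0}\mathcal{M}_n$, where $\mathcal{M}_n$ is spanned by $\{X_p\mid |p|=n\}$; then each $\Lambda_e$ raises degree by $1$, each $\Lambda^*_e$ lowers it by $1$, and each $\Lambda_v$ preserves degree $0$. Consequently, for a nonempty path $p$ the map $\Lambda_p=\pi(p)$ is homogeneous of degree $+|p|$, for a nonempty path $q$ the map $\Lambda^*_q=\pi(q^*)$ is homogeneous of degree $-|q|$, and $\Lambda_v=\pi(v)$ is homogeneous of degree $0$. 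Since $\epsilon_v=v=\epsilon_v^*$, the set in question consists precisely of the vertices $v$, the nonempty paths $p$, and the nonempty ghost paths $q^*$, so I would start from a hypothetical finite vanishing combination
\[
\sum_{v\in V_0} c_v\, v \;+\; \sum_{p\in P} a_p\, p \;+\; \sum_{q\in Q} b_q\, q^* \;=\; 0,
\]
with $V_0\subseteq VG$ finite and $P,Q\subseteq\mathcal{W}^{+}(G)$ finite, and apply $\pi$ to obtain the identity $\sum_v c_v\Lambda_v+\sum_p a_p\Lambda_p+\sum_q b_q\Lambda^*_q=0$ in $\End(\mathcal{M})$.

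Next I would isolate the coefficients by evaluating this endomorphism on well-chosen basis vectors of $\mathcal{M}$ and reading off each graded component, using that the $X_p$ form a free $\mathcal{R}$-basis. Evaluating at $X_{\epsilon_u}$ for a vertex $u$ annihilates every ghost term, since a nonempty $q$ cannot be a prefix of the empty path, giving
\[
0=\Big(\textstyle\sum_v c_v\Lambda_v+\sum_p a_p\Lambda_p\Big)(X_{\epsilon_u})
   = c_u\,X_{\epsilon_u}+\sum_{p\in P,\,T(p)=u} a_p\,X_p .
\]
The degree-$0$ part forces $c_u=0$, and the degree-$\ge 1$ part forces $a_p=0$ for every $p\in P$ with $T(p)=u$; letting $u$ range over $VG$ kills all the $c_v$ and all the $a_p$. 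It then remains to treat $\sum_q b_q\Lambda^*_q=0$: fixing $q_0\in Q$ and evaluating at $X_{q_0}$ produces contributions only from prefixes $q\preceq q_0$, and the unique degree-$0$ contribution is $b_{q_0}X_{\epsilon_{T(q_0)}}$ (coming from $q=q_0$), so reading off the degree-$0$ component yields $b_{q_0}=0$. Ranging over $q_0\in Q$ completes the argument.

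I expect the only genuine subtlety to be the bookkeeping around empty paths: one must remember that $\epsilon_v$, $v$, and $\epsilon_v^*$ denote the same algebra element, so vertices are listed once and the sums over $P,Q$ are restricted to nonempty paths, and one must verify that concatenation with an empty path behaves as expected (for instance $p\,\epsilon_{T(p)}=p$) when computing $\Lambda_p(X_{\epsilon_{T(p)}})$. Everything else is forced by the degree grading together with the freeness of $\mathcal{M}$. Since $\pi$ is defined over an arbitrary coefficient ring $\mathcal{R}$, this argument gives linear independence over any $\mathcal{R}$, which is exactly the strengthening of \cite[Proposition 4.9.]{TOMFORDE2011471}.
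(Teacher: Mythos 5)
Your proposal is correct and takes essentially the same approach as the paper: both transport the hypothetical vanishing combination through $\pi$ into $\End(\mathcal{M})$, evaluate at the vectors $X_{\epsilon_u}$ to kill the vertex and path coefficients (the ghost terms dying because a nonempty path is never a prefix of an empty one), and then evaluate at $X_{q_0}$ and invoke freeness of $\mathcal{M}$ to kill the ghost coefficients. The one organizational difference is that where the paper inducts over maximal-length elements of the support, your length grading extracts each $b_{q_0}$ in one step as the degree-zero component of the evaluation at $X_{q_0}$, which is if anything slightly cleaner, since it makes explicit that proper prefixes $q\prec q_0$ contribute only terms $X_r$ with $|r|>0$.
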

    \begin{proof}
        To show that for $P:=\{p\mid p\in \mathcal{W}(G)\}$, $P\cup P^*$ is linearly independent, we take some finite $A\subseteq P$ and some $k_p,k_{p^*}\in \mathcal{R}$ s.t.
            \[\sum_{p\in A} k_p p+\sum_{p\in A} k_{p^*} p^*=0,\]
        and show that all the coefficients are equal to $0$.
        
        For any empty paths $\varepsilon_v$ in $A$, we can assume $k_{\varepsilon^*_v}=0$ since $\varepsilon_v=\varepsilon^*_v$.\\
        Since $\pi$ is a homomorphism, we have
            \[0=\pi(\sum_{p\in A} k_p p+\sum_{p\in A} k_{p^*} p^*)=\sum_{p\in A} k_p\pi(p)+\sum_{p\in A} k_{p^*} \pi(p^*)=\sum_{p\in A} k_p\Lambda_p+\sum_{p\in A} k_{p^*} \Lambda^*_p,\]
         and if we evaluate this at $X_{\varepsilon_{v}}$ for any $v\in VG$ we get
            \[0=\sum_{p\in A} k_p\Lambda_p(X_{\varepsilon_{v}})+\sum_{p\in A} k_{p^*} \Lambda^*_p(X_{\varepsilon_{v}})=\sum_{p\in A, T(p)=v} k_pX_p.\]
        As the module is free and $v$ arbitrary, we must have $k_p=0$ for each $p\in A$.
        
        Take some $q\in A$, s.t. for any other $p\in A$ we have $q\not\prec p$ ($q$ has maximal length) then:
            \[0=\sum_{p\in A} k_{p^*} \Lambda^*_p(X_q)=k_{q^*}X_{\varepsilon_{T(q)}}\]
        giving us $k_{q^*}=0$. By removing $q$ from $A$ and repeating this step, we inductively get for each $p\in A$: $k_{p^*}=0$. This gives us the desired linear independence.
    \end{proof}
    We can define for any $R\in VG$,  $L_{\mathcal{R}}(G,R)$ to be the $\mathcal{R}$-submodule of $L_{\mathcal{R}}(G)$ generated by 
    \[\{pq^*\mid p,q\in \mathcal{W}(G),\ T(p)=T(q),\ O(p)=O(q)=R\}.\] We can see that $L_{\mathcal{R}}(G,R)$ is closed under the $*$ operation. Additionally,  the properties satisfied by the Leavitt path algebra show that it is also closed under multiplication. So it is a $*$-subalgebra of $L_{\mathcal{R}}(G)$. Since $R=\sum_{e\in o^{-1}(R)} ee^*\in L_{\mathcal{R}}(G,R)$ and for each $p,q\in\mathcal{W}(G,R)$
        \[Rpq^*=pq^*R=pq^*,\]
    $R$ is the unit in $L_{\mathcal{R}}(G,R)$. To connect the finite maximal bases of $\mathcal{T}(G,R)$ with the elements of $L_{\mathcal{R}}(G,R)$, we will define a simple expansion of a basis as follows.
    \begin{defn}
        For any basis $B\subseteq\mathcal{T}(G,R)$ and any $p\in B$, we define the simple expansion of $B$ based on $p$, as
            \[B^p=\big(B\setminus\{p\}\big)\cup\{pe\mid e\in o^{-1}(T(p))\}.\]
    \end{defn}
    We note that a simple expansion of a basis is also a basis and for any two bases $B_1,B_2\subseteq\mathcal{T}(G,R)$ with $\mathcal{S}(B_2)\subseteq\mathcal{S}(B_1)$ we can get $B_2$ out of $B_1$ by a series of simple expansions. A special case of this is that any basis can be reached by a series of simple expansions from $\{\varepsilon_R\}$.
    \begin{lemma}\label{ch4-lem1}
        For any finite maximal basis $B$ in $\mathcal{T}(G,R)$, we have:
            \[R=\sum_{p\in B}pp^*\]
        in $L_{\mathcal{R}}(G,R)$ for each ring $\mathcal{R}$.
    \end{lemma}
    \begin{proof}
        We will inductively prove this by noting that for the basis $B=\{\varepsilon_R\}$ we have
            \[\sum_{p\in B} pp^*=\varepsilon_R\varepsilon_R^*=RR^*=R.\]
         Since any basis can be achieved via simple expansion from $\{\varepsilon_R\}$, we simply have to show that if the lemma holds for some basis $B$ it also holds for $B^q$ for any $q\in B$. This can be seen by noting that
            \[\sum_{r\in B'}rr^*=\sum_{p\in B\setminus\{q\}}pp^*+q\Big(\sum_{e\in o^{-1}(T(q))}ee^*\Big)q^*=\sum_{p\in B\setminus\{q\}}pp^*+qT(q)q^*=\sum_{p\in B}pp^*.\]
        So by induction,
            \[\sum_{r\in B}rr^*=R\]
        for any basis $B$ in $\mathcal{T}(G,R)$.
    \end{proof}
    The two above lemmas will allow us to find linearly independent sets of the rooted Leavitt path algebra that together span the whole algebra.
    \begin{lemma}\label{ch4-lem2}
        For any basis $B$, the sets $\{bp^*\mid b\in B,\ p\in \mathcal{W}(G,R),\ T(b)=T(p)\}$ and $\{pb^*\mid b\in B,\ p\in \mathcal{W}(G,R),\ T(b)=T(p)\}$ are linearly independent.
    \end{lemma}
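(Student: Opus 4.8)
The plan is to re-run the representation argument from the proof of \lemref{ch4-lem1.5}, since the homomorphism $\pi\colon L_{\mathcal{R}}(G)\to\End(\mathcal{M})$ turns each product $bp^{*}$ into the concrete operator $\Lambda_{b}\circ\Lambda_{p}^{*}$, whose effect on the free generators $\{X_{q}\}$ is completely explicit. First I would record the key computation: for a pair $(b,p)$ with $b\in B$, $p\in\mathcal{W}(G,R)$ and $T(b)=T(p)$, and for any $q\in\mathcal{W}(G)$, one has $\pi(bp^{*})(X_{q})=\Lambda_{b}(\Lambda_{p}^{*}(X_{q}))$, which equals $X_{br}$ whenever $q=pr$ for some $r$, and is $0$ otherwise. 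Here the hypothesis $T(b)=T(p)=O(r)$ is exactly what guarantees that $\Lambda_{b}$ does not annihilate $X_{r}$, so the composite really does land on $X_{br}$.

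Next, assuming a finite vanishing combination $\sum_{(b,p)}k_{b,p}\,bp^{*}=0$, I would apply $\pi$ and evaluate at $X_{p_{0}}$ for an arbitrary fixed pair $(b_{0},p_{0})$ occurring in the sum. By the computation above, a summand indexed by $(b,p)$ contributes to $\pi(bp^{*})(X_{p_{0}})$ only when $p\preceq p_{0}$, in which case it contributes $X_{br}$ where $p_{0}=pr$. The decisive step is to isolate the generator $X_{b_{0}}$. It arises from the diagonal summand $(b_{0},p_{0})$, where $r=\varepsilon_{T(p_{0})}$ and $\Lambda_{b_{0}}(X_{\varepsilon_{T(p_{0})}})=X_{b_{0}}$. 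I claim it arises from no other summand: any other contribution has the form $X_{br}$ with $b\in B$ and $p_{0}=pr$, so $X_{br}=X_{b_{0}}$ would force $br=b_{0}$; if $r$ is empty this already forces $b=b_{0}$ and $p=p_{0}$, while if $r$ is nonempty it forces $b\prec b_{0}$ with $b,b_{0}\in B$, contradicting that a basis is an independent (antichain) set. Hence the coefficient of the free generator $X_{b_{0}}$ in $0=\pi(\sum k_{b,p}bp^{*})(X_{p_{0}})$ is exactly $k_{b_{0},p_{0}}$, forcing $k_{b_{0},p_{0}}=0$. As $(b_{0},p_{0})$ was arbitrary, the first family is linearly independent. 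I expect this collision-avoidance argument to be the main obstacle, and it is precisely the independence of $B$ that makes it go through: without the antichain property the operators $\Lambda_{b}\Lambda_{p}^{*}$ could overlap on the same generator and the coefficients would fail to decouple.

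Finally, the second family follows formally from the first by applying the involution. Since $*$ is $\mathcal{R}$-linear and $(pb^{*})^{*}=bp^{*}$, any relation $\sum_{(b,p)}k_{b,p}\,pb^{*}=0$ yields $\sum_{(b,p)}k_{b,p}\,bp^{*}=0$ after applying $*$, whence all $k_{b,p}=0$ by the first part. Thus $\{pb^{*}\mid b\in B,\ p\in\mathcal{W}(G,R),\ T(b)=T(p)\}$ is linearly independent as well, completing the proof.
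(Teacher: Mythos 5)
Your proof is correct, but it takes a somewhat different route from the paper's. The paper argues algebraically: given $\sum k_{b,p}\,bp^*=0$, it multiplies on the left by $b_0^*$ for a fixed $b_0\in B$, using the antichain property of the basis in the form $b_0^*b=\delta_{b_0,b}T(b_0)$ to kill every row except $b_0$'s, which reduces the relation to $\sum_p k_{b_0,p}\,p^*=0$; it then cites \lemref{ch4-lem1.5} (linear independence of paths and ghost paths) to conclude. You instead inline the representation machinery behind that lemma: applying $\pi$ and evaluating at $X_{p_0}$, you extract the coefficient of the single generator $X_{b_0}$, with the antichain property entering as collision avoidance ($br=b_0$ with $r$ nonempty would force $b\prec b_0$ inside $B$). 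Your case analysis there is sound, including the empty-$r$ case pinning down the diagonal pair, and the freeness of $\mathcal{M}$ justifies the coefficient extraction. So both proofs rest on the same two ingredients --- the module representation $\pi$ and the independence of $B$ --- but the paper packages the representation argument once in \lemref{ch4-lem1.5} and reuses it, killing a whole row of coefficients at a time, whereas you work pair by pair directly in $\End(\mathcal{M})$. The paper's version is shorter given the lemma is already established; yours is self-contained at the module level and makes explicit exactly where the antichain hypothesis is needed. Your treatment of the second family via the involution, using $(pb^*)^*=bp^*$ and $\mathcal{R}$-linearity of $*$, is identical to the paper's.
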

    \begin{proof}
        Note first that since the paths inside the basis are not prefixes of each other we have for any $b,b'\in B$
            \[(b')^*b=  \begin{cases}
                            0, & b\neq b',\\
                            T(b), & b=b'
                        \end{cases}.\]
        To show the linear independence, we look at some finite set $M\subseteq \mathcal{T}(G,R)$ and some basis $B$ of an inescapable cofinite subspace of $\mathcal{T}(G,R)$  and some $k_{b,p}\in \mathcal{R}$ for each $(b,p)\in B\times M$ with $T(b)=T(p)$ s.t.
            \[\sum_{\substack{b\in B, p\in M,\\ T(b)=T(p)}} k_{b,p} bp^*=0.\]
        Since the paths in $B$ are independent of each other, we can take some $b_0\in B$ and multiply by $b^*_0$ to get
            \[0=b^*_0\Big(\sum_{\substack{b\in B,p\in M,\\T(b)=T(p)}} k_{b,p} bp^*\Big)=\sum_{p\in M,T(b_0)=T(p)} k_{b_0,p} p^*. \]
        So by \lemref{ch4-lem1.5} we must have $k_{b_0,p}=0$ for each $p\in M, b_0\in B$ with $T(b)=T(p)$. This gives us the independence of the first set.

        We note that the second set is the conjugate of the first, so its linear independence follows from the linear independence of the first.
    \end{proof}
    We can show that the union of these linearly independent sets from the above lemma in fact span the whole Leavitt path algebra.
    \begin{lemma}\label{ch4-lem2.1}
        Let
            \[x=\sum_{m\in M, n\in N}k_{m,n}mn^*\in L_{\mathcal{R}}(G,R)\]
        for some finite $M,N\subseteq \mathcal{T}(G,R)$ and $k_{m,n}\in \mathcal{R}$. Then for any finite maximal basis $B\subseteq\mathcal{T}(G,R)$:
        \begin{itemize}
            \item if for each $m\in M$ there is some $b\in B$ s.t. $m\preceq b$, then
                \[x\in\text{span}(\{bp^*\mid b\in B,p\in\mathcal{W}(G,R)\}).\]
            \item  if for each $n\in N$ there is some $b\in B$ s.t. $n\preceq b$, then
                \[x\in\text{span}(\{pb^*\mid b\in B,p\in\mathcal{W}(G,R)\}).\]
        \end{itemize}
    \end{lemma}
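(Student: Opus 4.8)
The plan is to establish the first bullet directly, by re-expanding each monomial $mn^*$ against the resolution of the identity $R=\sum_{b\in B}bb^*$ provided by \lemref{ch4-lem1}, and then to derive the second bullet by applying the involution $*$. By linearity it suffices to treat a single monomial $mn^*$ appearing in $x$; since $mn^*\in L_{\mathcal{R}}(G,R)$ and $R$ is the unit of this algebra, left-multiplying by $R$ gives
\[ mn^* = \Big(\sum_{b\in B}bb^*\Big)mn^* = \sum_{b\in B} b\,(b^*m)\,n^*. \]

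The key observation is that the antichain (independence) property of the basis $B$ eliminates exactly the terms that would otherwise be problematic. By the formula for $q^*p$ recorded among the basic identities, $b^*m\neq 0$ only when $b$ and $m$ are $\preceq$-comparable. The hypothesis of the first bullet supplies some $b_0\in B$ with $m\preceq b_0$; were there also a $b\in B$ with $b\prec m$, we would obtain $b\prec b_0$, contradicting the independence of $B$. Hence the only surviving summands are those with $m\preceq b$, and for each such $b$ I would write $b=m r_b$ (with $r_b$ possibly empty) and read off $b^*m=r_b^*$.

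Substituting this back then yields
\[ mn^* = \sum_{\substack{b\in B\\ m\preceq b}} b\,r_b^*\,n^* = \sum_{\substack{b\in B\\ m\preceq b}} b\,(n r_b)^*, \]
where $n r_b$ is a genuine path in $\mathcal{W}(G,R)$: it is well defined because the standing convention forces $T(n)=T(m)=O(r_b)$, and it originates at $R$ because $n$ does. Each summand is therefore of the form $b p^*$ with $b\in B$ and $p\in\mathcal{W}(G,R)$, so $mn^*$ lies in the claimed span; summing over the monomials of $x$ with their coefficients $k_{m,n}$ completes the first bullet.

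For the second bullet I would pass to $x^*$. Applying $*$ sends $x=\sum k_{m,n}mn^*$ to a sum of monomials $nm^*$ (up to the action of $*$ on the coefficients, which is irrelevant to membership in a span), whose left-hand paths are precisely the $n\in N$; thus the hypothesis on $N$ is exactly the hypothesis of the first bullet applied to $x^*$. The first bullet then places $x^*$ in $\text{span}(\{bp^*\mid b\in B,\ p\in\mathcal{W}(G,R)\})$, and applying $*$ once more, which sends each $bp^*$ to $pb^*$, delivers the desired membership for $x$. I expect the only genuine subtlety to be the bookkeeping in the middle step, namely verifying that the antichain property really does kill the $b\prec m$ terms and that $r_b^*n^*=(n r_b)^*$ is the ghost of an admissible rooted path, rather than any deeper difficulty.
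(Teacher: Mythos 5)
Your proposal is correct and is essentially the paper's own argument: the paper likewise works monomial by monomial using the resolution of the identity from \lemref{ch4-lem1}, just inserted in the local form $T(m)=\sum_{r\in B_m}rr^*$ with $B_m=\{r\mid mr\in B\}$ a basis of $\mathcal{T}(G_{T(m)},T(m))$, which yields term-for-term the same expansion $mn^*=\sum_{r\in B_m}(mr)(nr)^*$ as your global multiplication by $R$ followed by antichain pruning. Your derivation of the second bullet from the first via the involution $*$ is a cosmetic variant of the paper's ``follows analogously.''
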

    \begin{proof}
        For the first point we can define for any $m\in M$, $B_m:=\{r\in \mathcal{W}(G)\mid mr\in B\}$. Since $B$ is a basis that does not contain a prefix of $m$ (since it contains $b$ s.t. $m\preceq b$), $B_m$ must be a basis in $\mathcal{T}(G_{T(m)},T(m))$. Thus, if we apply \lemref{ch4-lem1}, we get for any $n\in N$
            \[mn^*=mT(m)n^*=m\Big(\sum_{r\in B_m}rr^*\Big)n^*=\sum_{r\in B_m}(mr)(nr)^*.\]
        So by definition of $B_m$, we have $mn^*\in \text{span}(\{bp^*\mid b\in B,p\in\mathcal{W}(G,R)\}) $. Thus, we must have 
            \[x\in\text{span}(\{bp^*\mid b\in B,p\in\mathcal{W}(G,R)\}).\]
        The second point follows analogously.
    \end{proof}
    Unfortunately, we cannot strengthen this lemma, as not every element in the Leavitt path algebra is in the submodule generated by $\{pq^*\mid p\in B,\ q\in B',\ T(p)=T(q)\}$ for some finite maximal bases $B,B'$. An example of an element not in any of these submodules would be $p+q$ for any paths $p\precneq q$. We will call the elements that are in the span of $\{pq^*\mid p\in B,\ q\in B',\ T(p)=T(q)\}$, for some bases, \textbf{matrix-like}. Their resemblance to matrices will become more clear as we go along.\\
    While the set of matrix-like elements is not closed under addition (as we can add two paths that are prefixes to each other), we can see that it is closed under multiplication.

    Now we will show what happens when linear combinations in the spanning set $\{pq^*\mid p,q\in\mathcal{W}(G,R)\}$ are equal to zero.
    \begin{lemma}\label{ch4-lem2.5}
        For any finite sets $M,N\subseteq \mathcal{W}(G,R)$ and any coefficients $k_{m,n}$ for $m\in M,n\in N$, if we have
            \[\sum_{m\in M,n\in N} k_{m,n} mn^*=0. \]
        then for any $p\in \mathcal{W}(G,R)$, s.t. $\forall m\in M\cup N, p\not\prec m$, we have
            \[\sum_{m,n\preceq p}k_{m,n}=0.\]
        Additionally if we have
            \[\sum_{m\in M,n\in N} l_{m,n} mn^*= \sum_{m\in M,n\in N} j_{m,n} mn^*, \]
        then
            \[\sum_{m,n\preceq p}l_{m,n}=\sum_{m,n\preceq p} j_{m,n}.\]
    \end{lemma}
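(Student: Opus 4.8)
The plan is to reduce both assertions to a single coefficient computation carried out through the homomorphism $\pi$. First I would dispose of the second statement by linearity: if $\sum_{m,n}l_{m,n}mn^{*}=\sum_{m,n}j_{m,n}mn^{*}$, then $\sum_{m,n}(l_{m,n}-j_{m,n})mn^{*}=0$, so applying the first statement to the coefficients $k_{m,n}:=l_{m,n}-j_{m,n}$ gives $\sum_{m,n\preceq p}(l_{m,n}-j_{m,n})=0$, which is exactly $\sum_{m,n\preceq p}l_{m,n}=\sum_{m,n\preceq p}j_{m,n}$. Hence it suffices to treat the first claim, that $\sum_{m\in M,n\in N}k_{m,n}mn^{*}=0$ forces $\sum_{m,n\preceq p}k_{m,n}=0$ whenever $p\not\prec m$ for all $m\in M\cup N$.

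Next I would apply $\pi$ and evaluate the resulting endomorphism at the generator $X_{p}$ of $\mathcal{M}$. Since $\pi$ is a homomorphism,
\[0=\sum_{m,n}k_{m,n}\,\Lambda_{m}\circ\Lambda^{*}_{n}(X_{p}).\]
A single term is governed by the formulas for $\Lambda_{p}$ and $\Lambda^{*}_{p}$: the factor $\Lambda^{*}_{n}(X_{p})$ is nonzero precisely when $n\preceq p$, in which case, writing $p=nr_{n}$, it equals $X_{r_{n}}$, and then $\Lambda_{m}(X_{r_{n}})=X_{mr_{n}}$, the compatibility $T(m)=O(r_{n})=T(n)$ being guaranteed by the standing assumption that $k_{m,n}=0$ unless $T(m)=T(n)$. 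Thus the evaluated identity reads
\[\sum_{\substack{m\in M,\,n\in N\\ n\preceq p}}k_{m,n}\,X_{mr_{n}}=0\]
in the free $\mathcal{R}$-module $\mathcal{M}$, so each $X_{q}$-coefficient must vanish separately.

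The decisive step is to read off the coefficient of $X_{p}$ and to identify it with $\sum_{m,n\preceq p}k_{m,n}$; then freeness of $\mathcal{M}$ forces that sum to be $0$. This is where the hypothesis $p\not\prec m$ is essential: it forbids any $m$ or $n$ from strictly extending $p$, which rules out contributions arising by "overshooting and cancelling" and restricts the monomials $mr_{n}$ that can collapse onto $p$ to those governed purely by the prefix relations $m\preceq p$, $n\preceq p$ with $p=nr_{n}$. Carrying out this bookkeeping is the part I expect to be the main obstacle, because a naive cancellation argument in the free path monoid (using the $q^{*}p$ identities listed at the start of this section, which are the natural tool) only isolates the pairs with $mr_{n}=p$, and reconciling those with the full double sum $\sum_{m,n\preceq p}k_{m,n}$ is delicate. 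The route I would take to close this gap is to first invoke \lemref{ch4-lem2.1} to rewrite $x$ over a common maximal basis $B$, so that the indexing paths form antichains; for antichains, $m,n\preceq p$ forces $m=n$, and the double sum collapses onto the coefficient that the computation above isolates, completing the argument.
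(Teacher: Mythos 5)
Your reduction of the second claim to the first is fine and matches the paper, and evaluating $\pi(x)$ at $X_p$ is a legitimate one-sided analogue of the paper's computation, which instead sandwiches the relation on both sides as $p^*\big(\sum_{m,n}k_{m,n}mn^*\big)p$. But your closing step has a genuine gap, and you half-see it yourself. First, the coefficient of $X_p$ in your expansion is only the diagonal sum: $mr_n=p=nr_n$ forces $m=n$ by cancellation in the path monoid, so freeness of $\mathcal{M}$ yields $\sum_{m=n\preceq p}k_{m,m}=0$, not $\sum_{m,n\preceq p}k_{m,n}=0$. Second, the proposed repair via \lemref{ch4-lem2.1} does not go through: that lemma places only \emph{one} of the two indices into a basis (its two bullet points are separate one-sided statements), and the underlying rewriting replaces $mn^*$ by $\sum_{r\in B_m}(mr)(nr)^*$, shifting \emph{both} indices by the same suffix $r$; hence a pair with $|m|\neq|n|$ can never be brought to a pair lying inside one common antichain, so the configuration ``$m,n\preceq p$ forces $m=n$'' is unattainable in general. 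Moreover, even granting some rewriting, the lemma's conclusion concerns the \emph{original} coefficients $k_{m,n}$, and you never verify that the quantity $\sum_{m,n\preceq p}k_{m,n}$ is invariant under re-expansion --- that invariance is essentially the statement to be proved, so the patch is circular.

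For comparison, the paper computes $0=p^*xp=\sum_{m,n\preceq p,\,p=mr=ns}k_{m,n}r^*s$, so that off-diagonal pairs survive as terms $r^*s$ with $r\neq s$; it then asserts that each such $r^*s$ is a nonzero element of the linearly independent set of paths and ghost paths (\lemref{ch4-lem1.5}), whence grouping equal values and summing all the resulting equations yields the full double sum. That two-sided step, which retains the off-diagonal information, is exactly what your one-sided evaluation at $X_p$ discards, and it cannot be recovered afterwards. You should also know that the obstacle you flagged is not an artifact of your route: for two suffixes $r,s$ of $p$ the product $r^*s$ can in fact vanish --- with two distinct loops $e,f$ at the root, $p=ef$, $m=\varepsilon_R$, $n=e$ give $r=ef$, $s=f$ and $r^*s=f^*e^*f=0$, so the coefficient of such a pair escapes the count entirely (note $e^*-e(ee)^*-f(ef)^*=0$ is a vanishing combination where $\sum_{m,n\preceq ef}k_{m,n}=1$). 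The paper's proof dismisses this by claiming two suffixes of $p$ are always prefix-comparable, which this configuration contradicts, so the cross-term bookkeeping is a real issue for the statement itself and not something your diagonal-only computation, or the antichain collapse you propose, can be massaged into handling.
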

    \begin{proof}
        Take $p\in \mathcal{W}(G,R)$ such that $\forall m\in M\cup N, p\not\prec m$ then we have
            \[0=p^*(\sum_{m\in M,n\in N} k_{m,n} mn^*)p=\sum_{\substack{m\preceq p, n\leq p\\ p=mr=ns}}k_{m,n}r^*s.\]
        Note that for each of the $(r,s)$ pairs one of the paths is a prefix of the other since they are both suffixes of $p$. So $0\neq r^*s\in\{p,p^*\mid p\in\mathcal{W}(G,R)\}$,  which is linearly independent. Thus, we must have
            \[\sum_{m,n\preceq p,}k_{m,n}=0.\]
        If we have
            \[\sum_{m\in M,n\in N} l_{m,n} mn^*= \sum_{m\in M,n\in N} j_{m,n} mn^*,\]
        then
            \[\sum_{m\in M,n\in N} (l_{m,n}-j_{m,n}) mn^*=0,\]
        and thus, by applying the previous result
            \[\sum_{m,n\preceq p}l_{m,n}=\sum_{m,n\preceq p}j_{m,n}.\]
    \end{proof}
    \section[Embedding the Higman--Thompson group]{Embedding the Higman--Thompson group into the Leavitt path algebra}
    
    We can define the group of unitary elements in $L_{\mathcal{R}}(G,R)$ by
        \[U(L_{\mathcal{R}}(G,R)):=\{x\in L_{\mathcal{R}}(G,R)\mid xx^*=x^*x=1_{L_{\mathcal{R}}(G,R)}=R\},\]
    this is a multiplicative group with $*$ being the inverse.
    We will show in this chapter that we can embed the Higman--Thompson group into $U(L_{\mathcal{R}}(G,R))$, we will then construct a right inverse of this embedding that will induce an isomorphism between the Higman--Thompson groups whenever the Leavitt path algebras are isomorphic.
    
    Note, that we will be working mostly with $\mathds{Z}$, since this allows us to very easily work with the orthogonal matrices.
    To prove this theorem we will need several lemmas that tie Higman--Thompson automorphisms to the Leavitt path algebras.
    \begin{lemma}\label{ch4-lem3}
        For any Higman--Thompson representative $\phi:\mathcal{S}(B)\to\mathcal{S}(B')$, we have
            \[\sum_{p\in B}\phi(p)p^*\in U(L_{\mathcal{R}}(G,R))).\]
    \end{lemma}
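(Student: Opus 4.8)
The plan is to set $u:=\sum_{p\in B}\phi(p)p^*$ and verify directly that $uu^*=u^*u=R$, which is exactly the defining condition for membership in $U(L_{\mathcal{R}}(G,R))$. Before computing I would record two structural facts about a Higman--Thompson representative. Since the basis of a cofinite subspace is precisely its set of $\preceq$-minimal elements and $\phi$ is an isomorphism of the subspaces $\mathcal{S}(B)$ and $\mathcal{S}(B')$, the restriction $\phi|_B$ is a bijection of $B$ onto $B'$. Moreover, the Higman--Thompson condition $\phi(bp)=\phi(b)p$, valid for all $p\in\mathcal{W}(G,T(b))$, forces $T(\phi(b))=T(b)$, since $O(p)=T(b)$ must equal $T(\phi(b))$ for $\phi(b)p$ to be a legal path. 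In particular each summand $\phi(p)p^*$ satisfies $O(\phi(p))=O(p)=R$ and $T(\phi(p))=T(p)$, so $u\in L_{\mathcal{R}}(G,R)$ to begin with.

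The main computation is then short. Taking the involution gives $u^*=\sum_{q\in B}q\,\phi(q)^*$, so that
\[uu^*=\sum_{p,q\in B}\phi(p)\,p^*q\,\phi(q)^*.\]
Here I would invoke the orthogonality of basis paths already used in \lemref{ch4-lem2}: because distinct elements of $B$ are prefix-incomparable, $p^*q=T(p)$ when $p=q$ and $p^*q=0$ otherwise. All cross terms vanish, leaving $uu^*=\sum_{p\in B}\phi(p)\,T(p)\,\phi(p)^*$. Since $T(p)=T(\phi(p))$ and $w\,T(w)=w$ for any path $w$, this collapses to $\sum_{p\in B}\phi(p)\phi(p)^*$. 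As $\phi(B)=B'$ is again a finite maximal basis, \lemref{ch4-lem1} applied to $B'$ yields $uu^*=\sum_{b'\in B'}b'(b')^*=R$.

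The reverse product is symmetric: $u^*u=\sum_{p,q\in B}q\,\phi(q)^*\phi(p)\,p^*$, and now orthogonality of the basis $B'=\phi(B)$, together with injectivity of $\phi$, kills the off-diagonal terms, giving $u^*u=\sum_{p\in B}p\,T(\phi(p))\,p^*=\sum_{p\in B}pp^*=R$ by \lemref{ch4-lem1} applied to $B$. Hence $uu^*=u^*u=R$ and $u$ is unitary.

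I expect the only genuine point of care, rather than an obstacle, to be the two preliminary structural claims: that a Higman--Thompson representative carries $B$ bijectively onto $B'$ and preserves termini. Both are immediate from the definitions but deserve an explicit sentence, since the well-definedness of $u$ inside the rooted subalgebra and the two invocations of \lemref{ch4-lem1} (to $B'$ and to $B$) rely on $\phi(B)$ being exactly the maximal basis $B'$ and on $T(\phi(p))=T(p)$. Once these are in hand, the remainder is a mechanical application of the path-multiplication identities and \lemref{ch4-lem1}.
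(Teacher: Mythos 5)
Your proposal is correct and follows essentially the same route as the paper: both verify $uu^*=u^*u=R$ directly, using prefix-incomparability of basis elements to kill cross terms, the identity $T(\phi(p))=T(p)$ forced by the Higman--Thompson condition, and \lemref{ch4-lem1} applied to the maximal bases $B'=\phi(B)$ and $B$. The only cosmetic difference is that the paper dispatches $u^*u=R$ by invoking the same computation for $\phi^{-1}$, whereas you carry out the symmetric calculation explicitly; your extra care in spelling out that $\phi|_B$ is a bijection onto $B'$ and that termini are preserved is a virtue, as the paper uses both facts implicitly.
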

    In the following proof we will use $\delta_{p,q}$ to be the Kronecker delta, i.e. set it equal to $1$ whenever $p=q$ and $0$ otherwise. Also recall that $\mathcal{W}^+(G)$ denotes the space of non-empty paths.
    \begin{proof}
        Note that since $\phi$ is a Higman--Thompson representative, we must have $T(\phi(p))=T(p)$ for any path $p\in \mathcal{S}(B)$. Using this we can calculate   
        \begin{multline*}
            \Big(\sum_{p\in B}\phi(p)p^*\Big)\Big(\sum_{p\in B}\phi(p)p^*\Big)^*=\Big(\sum_{p\in B}\phi(p)p^*\Big)\Big(\sum_{p\in B}p\phi(p)^*\Big)=\\
            \sum_{p\in B}\phi(p)p^*p\phi(p)^*=\sum_{p\in B}\phi(p)T(p)\phi(p)^*=\sum_{p\in B}\phi(p)T\big(\phi(p)\big)\phi(p)^*=\sum_{p\in B}\phi(p)\phi(p)^*=R,
        \end{multline*}
        where the third equation follows since any $p\neq q\in B$ are independent, so $p^*q=0$ and  the last equation follows from \lemref{ch4-lem1} and the fact that $\phi$ is bijective. The other equation 
            \[\Big(\sum_{p\in B}\phi(p)p^*\Big)^*\Big(\sum_{p\in B}\phi(p)p^*\Big)=1_{L_{\mathcal{R}}(G,R)},\]
        follows by using the above calculation for $\phi^{-1}$.
    \end{proof}
    We can associate any Higman--Thompson representative $\phi$ with the unitary element $\sum_{p\in B}\phi(p)p^*$. To better understand this association in the case where $\mathcal{R}=\mathds{Z}$ we will have to describe the elements of $U(L_{\mathds{Z}}(G,R))$.
    The following will show that all unitary elements are matrix-like.
    \begin{lemma}\label{ch4-lem5}
        For any rooted graph $(G,R)$, we have:
            \begin{multline*}
                U(L_{\mathds{Z}}(G,R))=
                \{ \sum_{b\in B,b'\in B'}k_{b,b'}b(b')^*\mid (k_{b,b'})_{b,b'}\in \mathcal{O}(\mathds{Z}),\ (T(b)\neq T(b'))\implies (k_{b,b'}=0)\}
            \end{multline*}
        where $\mathcal{O}(\mathds{Z})$ denotes the set of orthogonal matrices over $\mathds{Z}$ (in any dimension).
    \end{lemma}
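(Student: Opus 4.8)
The statement is an equality of sets, so the plan is to prove the two inclusions separately; the reverse inclusion is a short computation while the forward inclusion is the substantive part.

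For the inclusion $\supseteq$, I would take bases $B,B'$ and an orthogonal integer matrix $K=(k_{b,b'})$ satisfying the terminus condition, set $x=\sum_{b\in B,b'\in B'}k_{b,b'}b(b')^*$, and compute $xx^*$ and $x^*x$ directly. Since distinct basis elements are prefix-incomparable, $(b')^*c'=\delta_{b',c'}T(b')$ (as already observed in the proof of \lemref{ch4-lem2}), and the terminus condition lets me replace $bT(b')c^*$ by $bc^*$; collecting terms gives $xx^*=\sum_{b,c}(KK^{T})_{b,c}\,bc^*$ and $x^*x=\sum_{b',c'}(K^{T}K)_{b',c'}\,b'(c')^*$. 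Orthogonality of $K$ then collapses these to $\sum_b bb^*$ and $\sum_{b'}b'(b')^*$, each equal to $R$ by \lemref{ch4-lem1}, so $x\in U(L_{\mathds{Z}}(G,R))$.

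For the inclusion $\subseteq$, let $x\in U(L_{\mathds{Z}}(G,R))$. First I would use \lemref{ch4-lem2.1} to choose a basis $B$ refining the left indices and write $x=\sum_{b\in B,p\in P}k_{b,p}bp^*$ for a finite set $P$, discarding any $p$ whose column of $K=(k_{b,p})$ is zero. A computation as above, now using $b^*c=\delta_{b,c}T(b)$, gives $x^*x=\sum_{p,s\in P}(K^{T}K)_{p,s}\,ps^*=R$. I would then pick a basis $B''$ refining $P$, refine this identity down to $B''$ via the simple-expansion formula of \lemref{ch4-lem1} (so each $ps^*=\sum_{r}(pr)(sr)^*$ with $pr\in B''$), and compare with $R=\sum_{b''\in B''}b''(b'')^*$ using the linear independence of \lemref{ch4-lem2} together with \lemref{ch4-lem2.5}. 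Matching the coefficient of $b''(b'')^*$ yields $\sum_{p\in P,\,p\preceq b''}(K^{T}K)_{p,p}=1$ for every $b''\in B''$. Since each $(K^{T}K)_{p,p}=\sum_b k_{b,p}^{2}$ is a nonnegative integer that is positive after discarding zero columns, this forces every $b''$ to have exactly one prefix in $P$, and that $p$ to satisfy $\sum_b k_{b,p}^2=1$. Hence $P$ is independent with cofinite up-closure, so $P$ is a basis $B'$, and each column of $K$ is $\pm$ a standard basis vector, i.e. $k_{b,p}=\epsilon_p\,[\,b=b(p)\,]$ for a sign $\epsilon_p$ and a map $b(\cdot):B'\to B$.

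Finally I would compute $xx^*$ in the form $x=\sum_{p\in B'}\epsilon_p\,b(p)p^*$ just established: since $B'$ is a basis, $p^*s=\delta_{p,s}T(p)$, so $xx^*=\sum_{p\in B'}b(p)b(p)^*=R=\sum_{b\in B}bb^*$, and \lemref{ch4-lem2} forces $b(\cdot)$ to be a bijection $B'\to B$. Therefore $K$ is a signed permutation matrix, which is exactly an element of $\mathcal{O}(\mathds{Z})$, and the terminus condition holds because $b(b')^*\neq 0$ requires $T(b)=T(b')$. I expect the main obstacle to be the middle step: extracting from the single identity $x^*x=R$ both that the right index set $P$ is genuinely a basis and that the columns of $K$ are signed unit vectors. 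This is where the integrality and nonnegativity of $\sum_b k_{b,p}^2$ must be combined with the refinement and the independence of \lemref{ch4-lem2}, and it is the only place where working over $\mathds{Z}$ rather than an arbitrary ring is essential.
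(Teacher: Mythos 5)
Your proof is correct, and while it shares the paper's overall skeleton (prove $\supseteq$ by the direct computation with $(b')^*c'=\delta_{b',c'}T(b')$ and \lemref{ch4-lem1}; prove $\subseteq$ by writing $x=\sum_{b\in B,p\in P}k_{b,p}bp^*$ via \lemref{ch4-lem2.1} and exploiting integrality), your middle step is genuinely different from, and somewhat cleaner than, the paper's. The paper uses \emph{both} unitarity relations at this stage: from $x^*x=1$ and \lemref{ch4-lem2.5} it gets only the full quadratic form $\sum_{m,n\preceq c}\bigl(\sum_b k_{b,m}k_{b,n}\bigr)=1$, cross terms included (i.e.\ norm $1$ of the \emph{summed} columns $\sum_{m\preceq c}\vec{k}_m$), and then analyses $b_1^*xx^*b_2=\delta_{b_1,b_2}T(b_1)$ by decomposing $m^*n$ into vertices, real paths, and ghost paths and invoking the independence of \lemref{ch4-lem1.5} to conclude that each \emph{row} of $K$ is a signed unit vector; only by recombining the two facts does it deduce that $M$ is a basis. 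You instead extract the diagonal-only identity $\sum_{p\preceq b''}(K^TK)_{p,p}=1$ from $x^*x=R$ alone, by expanding each $ps^*$ over a refining basis $B''$ and matching the coefficient of $b''(b'')^*$ against \lemref{ch4-lem2}; this single identity then yields simultaneously that $P$ is a basis and that each \emph{column} is a signed unit vector, with $xx^*=R$ needed only for the final bijectivity of $b(\cdot)$. This buys you a proof that avoids \lemref{ch4-lem1.5} and the $\vec{k}_{b,r}$ bookkeeping entirely, at the cost of one observation you should make explicit: in the expansion $ps^*=\sum_r(pr)(sr)^*$ a term can equal $b''(b'')^*$ only when $pr=sr$, which forces $p=s$, so the cross terms $(K^TK)_{p,s}$ with $p\neq s$ never contribute to that coefficient --- this is exactly why you get the diagonal sum rather than the full quadratic form. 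Note also that your citation of \lemref{ch4-lem2.5} is slightly off for this purpose (it would only give $\sum_{p,s\preceq b''}(K^TK)_{p,s}=1$, with cross terms); the identity you actually use follows from the expansion plus the linear independence of \lemref{ch4-lem2}, just as your parenthetical indicates, so the argument stands.
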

    Note that the bulk of this proof will work for arbitrary rings. Especially, we show the ``$\supseteq$" inclusion for arbitrary rings.
    \begin{proof}
        We will first show that for any two finite maximal bases $B,B'$ of $\mathcal{T}(G,R)$ and any orthogonal matrix $K=(k_{b,b'})_{b\in B,b'\in B'}$, the element
            \[x=\sum_{b\in B,b'\in B'}k_{b,b'}b(b')^*\]
        is unitary. For this, we calculate
            \[xx^*=\Big(\sum_{b\in B,b'\in B'}k_{b,b'}b(b')^*\Big)\Big(\sum_{c\in B,c'\in B'}k_{c,c'}c'c^*\Big)=\sum_{b,c\in B}\Big(\sum_{b'\in B'}k_{b,b'}k_{c,b'}\Big)bc^*=\sum_{b\in B}bb^*=1,\]
        which follows from $K$ being orthogonal and $B'$ being a basis. Showing that $x^*x=1$ follows analogously from $B$ being a basis.
        
        For the converse inclusion, we have to show that any unitary element $x$ is of that form. We take $B$ to be a finite basis and $M\subseteq \mathcal{W}(G,R)$ to be a finite set s.t. by \lemref{ch4-lem2.1} we can write
            \[x=\sum_{b\in B,m\in M}k_{b,m}bm^*\]
        with $M$ being chosen s.t. for each $m\in M$ we have some $b\in B$ s.t. $k_{b,m}\neq 0$. As $x$ is unitary, we have
            \[1=x^*x=\sum_{m,n\in M}\Big(\sum_{b\in B}k_{b,m} k_{b,n}\Big)mn^*.\]
        Take $C$ to be a finite maximal basis s.t. $\forall m\in M,\exists c\in C\ m\preceq c $, by \lemref{ch4-lem1} we have
            \[\sum_{c\in C} cc^*=\sum_{m,n\in M}\Big(\sum_{b\in B}k_{b,m} k_{b,n}\Big)mn^*.\]
        So by \lemref{ch4-lem2.5} we have for any $c\in C$
            \[\sum_{m,n\preceq c}\Big(\sum_{b\in B}k_{b,m} k_{b,n}\Big)=1.\]
        If we define for each $m\in M$ the $|B|$-dimensional vector $\vec{k}_m=(k_{b,m})_{b\in B}$, we can write this as
            \[\Big(\sum_{m\preceq c} \vec{k}_m\Big)\cdot\Big(\sum_{n\preceq c} \vec{k}_n\Big)=\sum_{m\preceq c,n\preceq c}\vec{k}_m\cdot \vec{k}_n= 1.\]
        So the vectors $(\sum_{m\preceq c} \vec{k}_m)$ are $|B|$-dimensional vectors of length $1$.
        
        When we look at the second equality $xx^*=1$ we get for any $b_1,b_2\in B$
            \[\delta_{b_1,b_2}T(b_1)=b^*_1b_2=b^*_1xx^*b_2=\sum_{m,n\in M}k_{b_1,m}k_{b_2,n}m^*n.\]
        By sorting the summands in the above equality we get
            \begin{multline*}
                \delta_{b_1,b_2}T(b_1)=\sum_{v\in VG}\Big(\sum_{\substack{m\in M\\ T(m)=v}}k_{b_1,m}k_{b_2,m}\Big)v+\sum_{r\in\mathcal{W}^+(G)}\Big(\sum_{\substack{m,n\in M\\ m=nr}} k_{b_1,m}k_{b_2,n}\Big) r\\+\sum_{r\in\mathcal{W}^+(G)}\Big(\sum_{\substack{m,n\in M\\ mr=n}} k_{b_1,m}k_{b_2,n}\Big) r^*
            \end{multline*}
        So if we define $\vec{k}_{b,r}:=(k_{b,m})_{m,mr\in M}$ and $\vec{k}_{b,r^*}:=(k_{b,mr})_{m,mr\in M}$ for each $b\in B,\ r\in\mathcal{W}(G)$, while ordering them s.t. $k_{b,m}$ has the same position in $\vec{k}_{b,r}$ as $k_{b,mr}$ in $\vec{k}_{b,r^*}$, we must have
            \[\vec{k}_{b_1,r}\cdot \vec{k}_{b_2,r^*}=\delta_{b_1,b_2}\delta_{\varepsilon_{T(b_1)},r}.\]
        Note that thus for any $b\in B$ and $v\in VG$
             \[\vec{k}_{b,\varepsilon_{v}}\cdot \vec{k}_{b,\varepsilon^*_{v}}=\delta_{T(b),v},\]
         which means, since $\vec{k}_{b,\varepsilon_{v}}=\vec{k}_{b,\varepsilon^*_{v}}$, those are vectors of length $1$. Additionally, since by assumption, these vectors are in $\mathds{Z}^{|M|}$, there must be a unique $m_b\in M$ with $T(b)=T(m_b)$ s.t. $k_{b,m_b}\neq 0\iff m=m_b$. Conversly any $m\neq m_b$, $k_{m,b}=0$, however since any for any $m$ some $k_{m,b}$ must be non-zero, it means that any $m$ is an $m_b$ for some $b\in B$.
        Now take some $c\in C$ and note that this implies that
            \[\forall b\in B,\ \Big(\sum_{m\preceq c} \vec{k}_{m}\Big)_b=   \begin{cases}
                                                    k_{b,m_b}, & m_b\leq c\\
                                                    0,  &\text{otherwise}
                                                \end{cases},\]
        and since this has norm $1$ we must have a unique $b$ s.t. $m_b\preceq c$. 
        
        Thus, for any $c\in C$, we have a unique $m\in M$ s.t. $m\preceq c$. Thus, $M$ must be a basis, we may assume that $M=C$. So we can write
            \[x=\sum_{b\in B,c\in C} k_{b,c}bc^*,\]
        making $x$ matrix-like.
        As $x$ is unitary we have
            \[\sum_{b\in B}bb^*=1=xx^*=\sum_{b_1,b_2\in B}\Big(\sum_{c\in C}k_{b_1,c}k_{b_2,c}\Big)b_1b^*_2.\]
        So by linear independence if we define $K=(k_{b,c})_{b\in B,c\in C}$ we must have $KK^T=I_{|B|}$ and similarly if we consider $1=x^*x$ we have $K^TK=I$. Thus $K$ is orthogonal.\\

    \end{proof}
    This allows us to embed the Higman--Thompson group of $\mathcal{T}(G,S)$ into the unitary group $U(L_{\mathcal{R}}(G,S))$:
    \begin{lemma}\label{ch4-lem6}
        For any rooted graph $(G,R)$ and any ring $\mathcal{R}$ there is an injective homomorphism:
            \[i:\mathcal{HT}(\mathcal{T}(G,S))\hookrightarrow U(L_{\mathcal{R}}(G,R))\]
    \end{lemma}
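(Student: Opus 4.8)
The plan is to define $i$ using the association from \lemref{ch4-lem3}: for a Higman--Thompson representative $\phi:\mathcal{S}(B)\to\mathcal{S}(B')$ set $i([\phi]):=\sum_{p\in B}\phi(p)p^*$, which \lemref{ch4-lem3} already guarantees lies in $U(L_{\mathcal{R}}(G,R))$. The work is then to check that this descends to equivalence classes, is multiplicative, and has trivial kernel. First I would establish the key invariance under simple expansion: if $q\in B$ and $B^q$ is its simple expansion, then using the Higman--Thompson property $\phi(qe)=\phi(q)e$ together with the vertex relation, $\sum_{e\in o^{-1}(T(q))}\phi(qe)(qe)^*=\phi(q)\big(\sum_{e}ee^*\big)q^*=\phi(q)T(q)q^*=\phi(q)q^*$, so that $\sum_{p\in B^q}\phi(p)p^*=\sum_{p\in B}\phi(p)p^*$. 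Since any refinement of a basis is reached by a finite chain of simple expansions, the unitary element is unchanged when $\phi$ is restricted to any smaller cofinite subspace.

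Well-definedness on equivalence classes then follows: if $\phi$ and $\psi$ are equivalent, they agree on a common cofinite subspace $\mathcal{S}$, whose basis refines both $B_\phi$ and $B_\psi$; by the invariance above both representatives give the same unitary as their common restriction to $\mathcal{S}$. For the homomorphism property I would take representatives $\phi:\mathcal{S}(B)\to\mathcal{S}(B')$ and $\psi:\mathcal{S}(B')\to\mathcal{S}(B'')$ whose domain/range already match (arranging this by the same refinement argument, which is legitimate since composition in $\mathcal{HT}$ is defined on compatible representatives). Then $\phi$ maps the basis $B$ bijectively onto $B'$, so $(b')^*\phi(b)=\delta_{b',\phi(b)}T(b')$, and the product collapses: $i([\psi])\,i([\phi])=\big(\sum_{b'\in B'}\psi(b')(b')^*\big)\big(\sum_{b\in B}\phi(b)b^*\big)=\sum_{b\in B}\psi(\phi(b))b^*=\sum_{b\in B}(\psi\circ\phi)(b)b^*=i([\psi\circ\phi])$, using $T(\phi(b))=T(b)$ to retain the surviving terms.

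For injectivity it suffices to show the kernel is trivial, i.e.\ $i([\phi])=R$ forces $[\phi]$ to be the identity. Writing $R=\sum_{p\in B}pp^*$ via \lemref{ch4-lem1} and comparing with $\sum_{p\in B}\phi(p)p^*$, we get $\sum_{p\in B}\big(\phi(p)-p\big)p^*=0$; since each summand is a product of a path with $(b)^*$ for $b\in B$, \lemref{ch4-lem2} (linear independence of $\{pb^*\}$) lets us read off $\phi(p)=p$ for every $p\in B$. The Higman--Thompson property then extends this to $\phi(pr)=pr$ on all of $\mathcal{S}(B)$, so $[\phi]$ is trivial.

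The main obstacle I expect is not any single computation but the bookkeeping around bases: making the simple-expansion invariance precise and then using it uniformly to (i) pass to a common refinement for well-definedness and (ii) align the range of one representative with the domain of the next so that the multiplicative collapse $(b')^*\phi(b)=\delta_{b',\phi(b)}T(b')$ is valid. Once the invariance lemma is in place, unitarity is supplied by \lemref{ch4-lem3} and injectivity by the linear independence of \lemref{ch4-lem2}, so those steps are essentially mechanical.
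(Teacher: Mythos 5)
Your proposal is correct and follows essentially the same route as the paper: the same assignment $[\phi]\mapsto\sum_{b\in B}\phi(b)b^*$ (unitary by \lemref{ch4-lem3}), well-definedness via invariance under simple expansions of the basis, a collapsing product computation for multiplicativity, and linear independence for injectivity. The only cosmetic differences are that the paper verifies $x_{\phi}x_{\psi}^{*}=x_{\phi\circ\psi^{-1}}$ for two representatives sharing a common domain basis, whereas you align the range of $\phi$ with the domain of $\psi$ and use $\phi(B)=B'$, and the paper obtains injectivity by left-multiplying $x_{\phi}=1$ by $b_0^{*}$ rather than subtracting $R=\sum_{b\in B}bb^{*}$ and invoking \lemref{ch4-lem2} directly.
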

    \begin{proof}
        For any Higman--Thompson representative $\phi:\mathcal{S}(B)\to \mathcal{S}(B')$ define the element of the Leavitt path algebra
            \[x_{\phi}:=\sum_{b\in B}\phi(b)b^*.\]
        We will define
            \[i([\phi]):=x_{\phi}.\]
        To see that this is well-defined we just have to show that any simple expansion of the basis defines the same $x_{\phi}$. For this take some $p\in B$ and set $\tilde{\phi}=\phi|_{\mathcal{S}(B^p)}$, we get:
        \begin{multline*}
            x_{\phi}=\sum_{b\in B}\phi(b)b^*=\sum_{\substack{b\in B,\\ b\neq b_0}}\phi(b)b^* + \phi(b_0)T(b_0)b_0^*=\\
            \sum_{\substack{b\in B,\\ b\neq b_0}}\phi(b)b^* + \sum_{e\in o^{-1}(T(b_0))}\phi(b_0)ee^*b_0^*=\sum_{b\in \tilde{B}}\phi(b)b^*=x_{\tilde{\phi}}
        \end{multline*}
        Thus $i$ is well-defined. To show that it is a homomorphism, we take two compatible Higman--Thompson representatives $\phi:\mathcal{S}(B_1)\to \mathcal{S}(B_2)$ and $\psi:\mathcal{S}(B_1)\to \mathcal{S}(B_3)$ and note that since $B_2$ is a basis and $T(\phi(b))=T(b)$ for any $b\in B_1$
            \begin{multline*}
                x_{\phi}(x_{\psi})^{-1}=x_{\phi}(x_{\psi})^*=\Big(\sum_{b\in B_1}\phi(b)b^*\Big)\Big(\sum_{b\in B_1}b\psi(b)^*\Big)=\\
                \sum_{\substack{b'\in B_2,b''\in B_3\\ \phi^{-1}(b')=\psi^{-1}(b)}}b'(b'')^*=\sum_{c\in B_3}\phi\big(\psi^{-1}(c)\big)(c)^*=x_{\phi\circ\psi^{-1}}=i([\phi][\psi]^{-1}).
            \end{multline*}
        Finally to see that it is injective we note that the unit in $U(L_{\mathcal{R}}(G,R))$ is also the unit in $L_{\mathcal{R}}(G,R)$ so for any $\phi:\mathcal{S}(B)\to \mathcal{S}(B')$ s.t. $x_{\phi}=1_{L_{\mathcal{R}}(G,R)}$ we must have for any $b_0\in B$
            \[(b_0)^*=(b_0)^*x_{\phi}=\sum_{b\in B}(b_0)^*b\phi(b)^*=T(b_0)\phi(b_0)^*=\phi(b_0)^*.\]
        By \lemref{ch4-lem2} we must therefore have $b_0=\phi(b_0)$ (as paths) for each $b_0\in B$. Thus $[\phi]$ is the identity.
    \end{proof}
    Define, for finite maximal bases $B,B'$ and matrices $M$ over $\mathds{Z}$, that are indexed by $B\times B'$
        \[x_{B,B',M}=\sum_{b\in B,b'\in B'}M_{b,b'}b(b')^*.\]
    While this definition depends on the ordering of $B$ and $B'$, one can endow each finite maximal basis with a canonical order to make it well-defined. Crucially, if $B=B'$ we assume that they are ordered the same way. Additionally, to make this unique, we assume that $T(b)\neq T(b')\implies M_{b.b'}=0$.
    From \lemref{ch4-lem5} we can see that \[U_{L_{\mathds{Z}}(G,S)}=\{x_{B,B',M}\mid M\text{ is an orthogonal matrix}\}\] and
    \[i\Big(\mathcal{HT}\big(\mathcal{T}(G,S)\big)\Big)=\{x_{B,B',M}\mid M\text{ is a permutation matrix}\}.\] Since the ring we are working with is $\mathds{Z}$, we note that all the orthogonal matrices are permutation matrices multiplied by diagonal matrices (with entries $\pm 1$ on the diagonal). This allows us to characterise the Higman--Thompson group as a factor of $U\big(L_{\mathds{Z}}(G,S)\big)$. We will factor the subgroup of diagonal unitary elements
        \[DU\big(L_{\mathds{Z}}(G,R)\big):=\{\sum_{b\in B} \kappa_b bb^*\mid B\text{ is a finite maximal basis}, \kappa_{b}\in\{1,-1\}\}.\]
    \begin{lemma}\label{ch4-lem7}
        For any rooted graph $(G,R)$ we have
            \[\mathcal{HT}\big(\mathcal{T}(G,R)\big)\cong U\big(L_{\mathds{Z}}(G,R)\big)/{DU\big(L_{\mathds{Z}}(G,R)\big)}.\]
    \end{lemma}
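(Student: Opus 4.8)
The plan is to exhibit an explicit quotient map $\rho\colon U(L_{\mathds{Z}}(G,R))\to \mathcal{HT}(\mathcal{T}(G,R))$ whose kernel is exactly $DU(L_{\mathds{Z}}(G,R))$, and then invoke the first isomorphism theorem; this will simultaneously establish that $DU$ is normal. The starting point is the already noted fact that every integer orthogonal matrix is a signed permutation matrix, hence factors uniquely as $M=PD$ with $P$ a permutation matrix and $D$ the diagonal $\pm1$ matrix $D=P^{-1}M$ (equivalently, $P$ is the entrywise absolute value of $M$). Given a unitary written via \lemref{ch4-lem5} as $x=x_{B,B',M}$, I would define $\rho(x):=i^{-1}(x_{B,B',P})$, where $P$ is the permutation part of $M$ and $i$ is the embedding of \lemref{ch4-lem6}, so that $x_{B,B',P}$ lies in $i(\mathcal{HT})$ and $i^{-1}$ is well-defined by injectivity.

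First I would record the basic refinement mechanism: a simple expansion at a column $p\in B'$ forces the matched row $b_0$ (the unique $b_0$ with $M_{b_0,p}\ne 0$) to expand as well, via $b_0p^*=\sum_{e}(b_0e)(pe)^*$, so simple expansions act on matched row/column pairs and carry a signed permutation matrix to a signed permutation matrix with the same permutation class. Since the matrix attached to a fixed pair of bases is unique by \lemref{ch4-lem2}, and any two bases have a common refinement, this shows $\rho$ is well-defined: the permutation part is invariant under the expansions relating different representations of the same $x$, and $i$ is injective so $i^{-1}$ is unambiguous.

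Next I would verify that $\rho$ is a homomorphism. To multiply $x=x_{B_1,B_2,M}$ and $y=x_{B_3,B_4,N}$ I would refine the inner bases $B_2$ and $B_3$ to a common refinement $C$ (expanding the outer bases in matched pairs as above), obtaining $x=x_{B_1',C,M'}$ and $y=x_{C,B_4',N'}$, so that $xy=x_{B_1',B_4',M'N'}$. Writing $M'=P_xD_x$ and $N'=P_yD_y$ and using that a diagonal $\pm1$ matrix commutes past a permutation matrix up to permuting its diagonal entries, i.e.\ $D_xP_y=P_yD_x''$, I get $M'N'=(P_xP_y)(D_x''D_y)$, so the permutation part of $M'N'$ is $P_xP_y$. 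Because the permutation unitaries multiply as $x_{B_1',C,P_x}\,x_{C,B_4',P_y}=x_{B_1',B_4',P_xP_y}$ and $i$ is a homomorphism, $\rho(xy)=i^{-1}(x_{B_1',B_4',P_xP_y})=\rho(x)\rho(y)$.

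Finally, surjectivity is immediate from $\rho\circ i=\mathrm{id}$, since $i([\phi])=x_{B,B',P}$ has permutation part $P$ and trivial diagonal. For the kernel, the decomposition $x_{B,B',PD}=x_{B,B',P}\,x_{B',B',D}$, a direct computation using $(b')^*c=\delta_{b',c}T(b')$ on the basis $B'$ together with $x_{B',B',D}=\sum_{b'\in B'}D_{b',b'}b'(b')^*\in DU$, shows that $\rho(x)=1$ forces $x_{B,B',P}=1$ by injectivity of $i$, whence $x=x_{B',B',D}\in DU$; conversely every element of $DU$ has diagonal matrix and so lies in $\ker\rho$. Thus $\ker\rho=DU$, and $U(L_{\mathds{Z}}(G,R))/DU(L_{\mathds{Z}}(G,R))\cong\mathcal{HT}(\mathcal{T}(G,R))$ follows. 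I expect the main obstacle to be the two-basis bookkeeping in the well-definedness and homomorphism steps, specifically checking that the permutation part is genuinely invariant under the matched simple expansions used both to compare representations and to compute products, and that the diagonal factor can always be transported through the permutation factor when the two sides are indexed by different bases.
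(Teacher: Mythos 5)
Your proposal is correct and is essentially the paper's own proof: your $\rho$ coincides with the paper's map $\Theta$, which likewise extracts the permutation part of the signed-permutation matrix (using that every integer orthogonal matrix factors as a permutation matrix times a diagonal $\pm 1$ matrix), establishes surjectivity via $\rho\circ i=\mathrm{id}$, identifies the kernel with $DU\big(L_{\mathds{Z}}(G,R)\big)$, and concludes by the first isomorphism theorem. The only difference is cosmetic: for well-definedness the paper compares two signed-permutation representations over a common expanded outer basis through the unitarity computation $yx^*=1$ and linear independence of starred paths, whereas you track matched simple expansions of row/column pairs; the two mechanisms are interchangeable here.
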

    \begin{proof}
        Since any orthogonal matrix in $\mathds{Z}$ can be obtained from a permutation matrix by changing some $1$'s to $-1$'s we can write any $x\in U(L_{\mathds{Z}}(G,R))$ as
            \[x=\sum_{b\in B}\kappa_b \phi(b)b^*,\]
        where $B$ is a finite maximal basis, $\phi:\mathcal{S}(B)\to \mathcal{S}(B')$ is a Higman--Thompson representative and $\kappa_b=\pm 1$. We can thus define the map $\Theta: U\big(L_{\mathds{Z}}(G,R)\big)\to \mathcal{HT}(\mathcal{T}(G,R))$ s.t. $\Theta(x)=[\phi]$. To see that it is well-defined, we take some Higman--Thompson representatives $\phi:\mathcal{S}(B_1)\to \mathcal{S}(B_2)$ and $\psi:\mathcal{S}(B_3)\to \mathcal{S}(B_4)$ and some $\kappa_b,\kappa'_{b'}\in \{1,-1\}$ for each $b\in B_1,b'\in B_3$ s.t.
            \[x:=\sum_{b\in B_1}\kappa_b \phi(b)b^*=\sum_{b'\in B_3}\kappa'_{b'} \psi(b')(b')^*=:y.\]
        Since expanding bases does not change these sums, we may assume that $B_2=B_4$. Additionally, since $x,y$ are unitary, we have
            \[\sum_{b\in B_1} bb^*=R=1_{L_{\mathds{Z}}(G,R)}=yx^*=\sum_{b\in B_1}\kappa_b\kappa'_{b}b\psi^{-1}(\phi(b))^*,\]
        so for any $b\in B_1$
            \[b^*=(\psi^{-1}(\phi(b))^*.\]
        Thus $\psi^{-1}\big(\phi(b)\big)=b$, as paths (using linear independence of starred paths), showing $\psi=\phi$.\\
        Similarly to the proof of \lemref{ch4-lem6} we can show that $\Theta$ is a homomorphism.
        
        $\Theta$ is clearly surjective, since for any Higman-Thompson representative $\phi$ we have:
            \[[\phi]=\Theta\Big(\sum_{b\in B}\phi(b)b^*\Big)\]
        So it suffices to show that the kernel of $\Theta$ consists of the diagonal elements. This follows from
            \[\Theta\Big(\sum_{b\in B}\kappa_b \phi(b)b^*\Big)=\text{id}\iff \phi=\text{id} \iff \sum_{b\in B}\kappa_b \phi(b)b^*\in DU\big(L_{\mathds{Z}}(G,R)\big).\]
        This gives us the desired result.
    \end{proof}
    We also note that the surjection from the above proof give us a short exact sequence
        \[0\to DU\big(L_{\mathds{Z}}(G,R)\big)\to U\big(L_{\mathds{Z}}(G,R)\big)\to\mathcal{HT}\big(\mathcal{T}(G,R)\big)\to 0.\]
    Additionally since the second map has a right inverse as seen in \lemref{ch4-lem6}, so the unitaries can be written as a semi-direct product 
        \[U\big(L_{\mathds{Z}}(G,R)\big)\cong DU\big(L_{\mathds{Z}}(G,R)\big)\rtimes \mathcal{HT}\big(\mathcal{T}(G,R)\big).\]
    To show that an isomorphism between two rooted Leavitt path algebras of graphs induces an isomorphism between the corresponding Higman--Thompson groups, we will have to use an alternative characterisation the diagonal unitary.
    For this we will define the set of symmetric elements in $L_{\mathcal{R}}(G,R)$
    \[
        S\big(L_{\mathcal{R}}(G,R)\big):=\{ x\in L_{\mathcal{R}}(G,R) \mid x^*=x \}.
    \]
    While we cannot characterise the symmetric elements in the same way we have for unitary ones, we can however establish two lemmas about them:
    \begin{lemma}\label{ch4-lem8}
        For any matrix-like element $x\in S\big(L_{\mathcal{R}}(G,R)\big)$ s.t. 
            \[x=\sum_{b\in B,c\in C} k_{b,c}bc^*\]
        for some finite maximal bases $B,C$, we have $B=C$ and for each $b,c\in B$
            \[k_{b,c}=k_{c,b}.\]
    \end{lemma}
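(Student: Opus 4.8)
The plan is to prove the two assertions in turn, treating the symmetry of the coefficients as the easy consequence of the harder equality $B=C$. The single hypothesis $x=x^*$ rewrites as the identity
\[\sum_{b\in B,c\in C}k_{b,c}\,bc^*=\Big(\sum_{b\in B,c\in C}k_{b,c}\,bc^*\Big)^*=\sum_{b\in B,c\in C}k_{b,c}\,c\,b^*,\]
so that the same element is presented with left-parts in $B$ and right-parts in $C$ on the one side, and with left-parts in $C$ and right-parts in $B$ on the other.

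First I would dispose of the coefficient symmetry, assuming $B=C$ has been established. In that case the right-hand expression above equals $\sum_{b,c\in B}k_{b,c}\,cb^*=\sum_{b,c\in B}k_{c,b}\,bc^*$ after renaming the summation indices, so $\sum_{b,c\in B}k_{b,c}\,bc^*=\sum_{b,c\in B}k_{c,b}\,bc^*$. Since $B$ is a basis, $\{bc^*\mid b,c\in B,\ T(b)=T(c)\}$ is a subset of the linearly independent family of \lemref{ch4-lem2}, hence itself linearly independent; comparing coefficients of $bc^*$ yields $k_{b,c}=k_{c,b}$ for all $b,c\in B$. This step is routine.

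The main obstacle is the equality $B=C$. The difficulty is that a matrix-like element has no unique presentation: refining $B$ and $C$ by (possibly different) simple expansions produces further valid presentations, so $B=C$ can be expected only for the reduced presentation, in which every $b\in B$ and every $c\in C$ occurs with some nonzero coefficient, and the argument must exploit the prefix structure of the tree rather than pure linear algebra. To capture that structure I would use the homomorphism $\pi\colon L_{\mathcal R}(G)\to\End(\mathcal M)$. For a path $q$ longer than every element of $B\cup C$ there are unique prefixes $b(q)\in B$ and $c(q)\in C$ with $q=b(q)s=c(q)r$, and a direct computation with the $\Lambda_p,\Lambda_p^*$ gives
\[\pi(x)(X_q)=\sum_{b\in B}k_{b,c(q)}\,X_{br}\qquad\text{and}\qquad \pi(x^*)(X_q)=\sum_{c\in C}k_{b(q),c}\,X_{cs}.\]
Because $x=x^*$ these two elements of the free module $\mathcal M$ coincide, and matching the coordinates $X_p$ ties the $B$-prefix decomposition of each deep path to its $C$-prefix decomposition. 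Carrying this comparison out for the reduced presentation forces the cofinite subspaces $\mathcal S(B)$ and $\mathcal S(C)$ to agree; since a cofinite subspace has a unique basis, $B=C$ follows, after which the symmetry of $(k_{b,c})$ is the easy step above.

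I expect the coordinate-matching to be the genuinely delicate point: one must track how the shared suffix $r$ (respectively $s$) interacts with the prefixes and rule out the shifted presentations in which $B$ and $C$ differ yet become equal only after further refinement. The cleanest organising principle I see is to identify $B$ and $C$ intrinsically as the left- and right-supports of $x$, observe that $*$ interchanges these supports while fixing $x$, and conclude $B=C$ from $x=x^*$; the $\pi$-computation above is then exactly the tool that makes "left-support'' and "right-support'' precise.
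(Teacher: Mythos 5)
Your symmetry-of-coefficients step (given $B=C$) is fine, and your instinct that everything hinges on ruling out ``shifted presentations'' is exactly right --- but that is the step your plan cannot deliver, because such presentations genuinely exist: the claim that $B=C$ holds for the reduced presentation is false. Take $VG=\{R,v\}$ with edges $a_1,a_2\colon R\to v$ and loops $e_1,e_2$ at $v$, and the symmetric (indeed unitary) element $x=a_1a_2^*+a_2a_1^*$. Using the relation $v=e_1e_1^*+e_2e_2^*$ one gets
\[x=a_2a_1^*+(a_1e_1)(a_2e_1)^*+(a_1e_2)(a_2e_2)^*,\]
a presentation over the finite maximal bases $B=\{a_2,\,a_1e_1,\,a_1e_2\}$ and $C=\{a_1,\,a_2e_1,\,a_2e_2\}$ in which every $b\in B$ and every $c\in C$ carries a nonzero coefficient, and yet $B\neq C$. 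Your coordinate-matching conditions are satisfied here without forcing anything: for a deep path $q=a_1e_1w$ one has $c(q)=a_1$ with $r=e_1w$ and $b(q)=a_1e_1$ with $s=w$, and the unique left term $X_{a_2(e_1w)}$ matches the unique right term $X_{(a_2e_1)w}$ --- exactly a shift $br=cs$ with $b\neq c$ and $r\neq s$. So the system of equations you extract from $\pi(x)=\pi(x^*)$ admits non-diagonal reduced solutions, and no amount of care in the coordinate matching can conclude $\mathcal{S}(B)=\mathcal{S}(C)$.

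For comparison, the paper argues internally rather than through $\pi$: it computes $k_{b,c}T(b)=b^*xc=b^*x^*c$ and collapses the right-hand side to the single term $k_{b_c,c_b}(b^*c_b)(b_c^*c)$ using ``the unique element $c_b\in C$ comparable with $b$'' --- and that uniqueness is precisely what fails in the example above ($a_1\in C$ is comparable with both $a_1e_1,a_1e_2\in B$, and the two resulting terms recombine as $e_1e_1^*+e_2e_2^*=v$). So the delicate point you flagged is a genuine fault line, not a technicality either route can sidestep. What is true, and all that the application in \lemref{ch4-lem10} actually requires, is the existential form: a symmetric matrix-like $x$ admits \emph{some} presentation with $B=C$ and $k_{b,c}=k_{c,b}$ (in the example, over $\{a_1,a_2\}\times\{a_1,a_2\}$, or over the common refinement $\{a_1e_1,a_1e_2,a_2e_1,a_2e_2\}$, where the coefficient matrix is a symmetric permutation). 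If you redirect your $\pi$-computation at that weaker statement --- showing the paired prefixes $b\leftrightarrow c$ with their shared suffixes can always be deepened to a presentation over a single basis --- you obtain a usable lemma; the claim that the given $B$ and $C$ themselves coincide must be dropped.
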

    \begin{proof}
        We take some $b\in B$ and $c\in C$ with $T(b)=T(c)$ giving us
            \[b^*xc=k_{b,c}T(b).\]
        However if we take $b_c\in B$ to be the unique element that is comparable with $c$ and vice versa $c_b\in C$ to be the unique element comparable with $b$, then we have
            \[k_{b,c}T(b)=b^*xc=b^*x^*c=k_{b_c,c_b} (b^*c_b)(b_c^*c).\]
        This is only the case if $b=c_br$, $c=b_cr$ for some path $r\in\mathcal{W}(G)$ with $T(r)=T(b)$ and $k_{b,c}=k_{b_c,c_b}$. This however means that each $b\in B$ there exists a $C\ni c_b\preceq b$ and thus $\mathcal{S}(B)\subseteq\mathcal{S}(C)$. Conversely, we have for each $c\in C$ there exists a $B\ni b_c\preceq c$ and thus $\mathcal{S}(C)\subseteq\mathcal{S}(B)$. This means that $\mathcal{S}(B)=\mathcal{S}(C)$ and thus $B=C$.
        Thus, we must have $b_c=b$ and $c_b=c$ and so $k_{b,c}=k_{c,b}$, for each $b,c\in B$.\\
    \end{proof}
    For general symmetric elements, we can only say that:
    \begin{lemma}\label{ch4-lem9}
        Let $x$ be an element in $S(L_{\mathcal{R}}(G,R))$ s.t.
            \[x=\sum_{b\in B,m\in M}k_{b,m}bm^*,\]
        where $B$ is a finite, maximal basis and $M$ is a set of paths s.t. $B\subseteq M$. Then we have:
            \[\forall b,c\in B,\ k_{b,c}=k_{c,b}\]
    \end{lemma}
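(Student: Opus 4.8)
The plan is to extract the two scalars $k_{b,c}$ and $k_{c,b}$ by compressing $x$ with the basis paths: I would left-multiply by $b^*$ and right-multiply by $c$, and then exploit the symmetry $x=x^{*}$ by computing $b^{*}xc$ in two ways, once through $x$ and once through $x^{*}$, and comparing the \emph{empty-path} (vertex) components of the two resulting expressions. At the outset I would dispose of the trivial case: if $T(b)\neq T(c)$, then the standing convention forces $k_{b,c}=k_{c,b}=0$, so I may assume $T(b)=T(c)=:v$.

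First I would reduce the two compressions to manageable sums. Since distinct elements of the basis $B$ are pairwise incomparable, $b^{*}b'=\delta_{b,b'}T(b)$ for $b,b'\in B$, and combining this with the standing convention (so that $T(m)=T(b)$ whenever $k_{b,m}\neq 0$) gives
\[b^{*}x=\sum_{m\in M}k_{b,m}\,m^{*},\qquad x^{*}c=\sum_{m\in M}k_{c,m}\,m,\]
and hence, multiplying on the appropriate side,
\[b^{*}xc=\sum_{m\in M}k_{b,m}\,m^{*}c,\qquad b^{*}x^{*}c=\sum_{m\in M}k_{c,m}\,b^{*}m.\]
Next I would analyse each summand using the product identities: $m^{*}c$ equals the vertex $T(c)=v$ exactly when $m=c$, and is otherwise a nonempty path (if $m\prec c$), a nonempty ghost path (if $c\prec m$), or zero (if $m,c$ are incomparable). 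Because $c\in B\subseteq M$ the term $m=c$ really occurs, and it is the only one producing the empty path, so the $\varepsilon_v$-component of $b^{*}xc$ is precisely $k_{b,c}\,v$. The same reasoning applied to $b^{*}m$ shows the $\varepsilon_v$-component of $b^{*}x^{*}c$ is precisely $k_{c,b}\,v$.

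Finally, since $x$ is symmetric we have $b^{*}xc=b^{*}x^{*}c$, so subtracting yields
\[(k_{b,c}-k_{c,b})\,v+(\text{a combination of nonempty paths and nonempty ghost paths})=0.\]
By \lemref{ch4-lem1.5} the family of all paths together with all ghost paths is linearly independent, and the empty path $v=\varepsilon_v$ is distinct from every nonempty path and every ghost path; therefore its coefficient must vanish, giving $k_{b,c}=k_{c,b}$.

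I expect the only real obstacle to be the bookkeeping around the empty-path component, namely verifying that no index $m\neq c$ (respectively $m\neq b$) can contribute to it — this is exactly where the hypotheses $B\subseteq M$ and the incomparability of basis elements are used — and confirming that the leftover nonempty paths and ghost paths, which may well coincide across the two sides, cannot corrupt the coefficient of $\varepsilon_v$ precisely because of the linear independence in \lemref{ch4-lem1.5}. Note that, unlike \lemref{ch4-lem8}, here $M$ need not be a basis, so I cannot conclude $B=M$ or control the off-diagonal coefficients; the argument only recovers the symmetry of the block indexed by $B\times B$, which is all the statement claims.
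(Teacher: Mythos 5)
Your proposal is correct and takes essentially the same route as the paper's own proof: both compress $x$ to $b^{*}xc$, compute it a second time through $x^{*}$ using symmetry, and compare the coefficient of the vertex $T(c)$ via the linear independence of paths and ghost paths (\lemref{ch4-lem1.5}), with the case $T(b)\neq T(c)$ dispatched by the standing convention. The only cosmetic difference is that the paper names the connecting paths $r_{m,c}$ explicitly when sorting the summands, whereas you argue directly about which indices $m$ can contribute to the empty-path component.
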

    \begin{proof}
        Taking some $b,c\in B$, with $T(b)=T(c)$ gives us
            \[b^*xc=\sum_{m\in M,m\prec c}k_{b,m}r_{m,c}+k_{b,c}T(c)+ \sum_{m\in M, c\prec m}k_{b,m}r_{m,c}^*,\]
        where $r_{m,c}$ is a path s.t. $c=mr_{m,c}$, or $m=cr_{m,c}$ for any $m$ that is comparable to $c$. Using symmetry of $x$ we also have:
            \[b^*xc=b^*x^*c=\sum_{m\in M, m\prec b} k_{c,m} r_{m,b} + k_{c,b}T(c)+\sum_{m\in M, b\prec m} k_{c,m} r^*_{m,b} \]
        where $r_{m,b}$ is as before.\\
        And since the set of paths and starred paths is independent due to \lemref{ch4-lem1.5}, we can compare the coefficients next to $T(c)$ and get
            \[k_{b,c}=k_{c,b}.\]
        For $b,c$ with $T(b)\neq T(c)$, we have by standing assumption $k_{b,c}=0=k_{c,b}$.
    \end{proof}
    Note additionally that each diagonal element is symmetric (as symmetric elements are closed under addition and multiplication by scalars). Using this, we can describe diagonal unitary elements as follows,
        \begin{multline*}
            \widetilde{DU}\big(L_{\mathds{Z}}(G,R)\big):=\{x\in U\big(L_{\mathds{Z}}(G,S)\big)\cap S\big(L_{\mathds{Z}}(G,R)\big)\mid \forall y\in L_{\mathds{Z}}(G,R),\\
            \exists y_+,y_-\in L_{\mathds{Z}}(G,R),\ y=y_+ + y_-,\ xy_+=y_+\ xy_-=-y_-\}.
        \end{multline*}
    We will show that this is in fact the same diagonal unitary group as defined before. 
    \begin{lemma}\label{ch4-lem10}
    For any rooted graph $(G,R)$ and any $x\in U\big(L_{\mathds{Z}}(G,R)\big)$, we have $x\in \widetilde{DU}\big(L_{\mathds{Z}}(G,R)\big)$ if and only if there exist some finite, maximal basis $B\subseteq\mathcal{T}(G,S)$ and some elements $\kappa_b\in\{\pm 1\}$ such that
        \[x=\sum_{b\in B}\kappa_b bb^*.\]
    \end{lemma}
    \begin{proof}
        To show that any element of this form is in $\widetilde{DU}\big(L_{\mathds{Z}}(G,R)\big)$, we note that for any $y\in L_{\mathds{Z}}(G,R)$ we can write it as
            \[y=\sum_{b\in B', m\in M} k_{b,m} bm^*.\]
        By expanding $B'$ and $M$ we may assume that for any $p\in B'\cup M$ we have some $b\in B$ s.t. $b\preceq p$. Since this prefix is always unique we will denote it by $b_p$ So when we multiply the equality by $x$ we get
        \begin{align*}
            xy=\sum_{p\in B',m\in M}\kappa_{b_p}k_{p,m} pm^*.
        \end{align*}
        So if $k_{p,m}=0$ whenever $\kappa_{b_p}=-1$ we have $xy=y$ and if $k_{p,m}=0$ whenever $\kappa_{b_p}=1$ we have $xy=-y$. So if we define:
        \begin{align*}
            y_+:&=\sum_{\substack{p\in B',m\in M\\ \kappa_{b_p}=1}} k_{p,m} pm^*\text{ and }\\
            y_-:&=\sum_{\substack{p\in B',m\in M\\ \kappa_{b_p}=-1}} k_{p,m} pm^*,
        \end{align*}
        we have $y=y_++y_-$, $xy_+=y_+x$ and $xy_-=-y_-x$.

        For the converse, we take some $x\in \widetilde{DU}\big(L_{\mathds{Z}}(G,R)\big)$. Since $x\in U\big(L_{\mathds{Z}}(G,S)\big)$ and is symmetric, we have (by \lemref{ch4-lem5}, \lemref{ch4-lem8} and as we are working over the integers) some finite, maximal basis $B$, a Higman--Thompson automorphism representative $\phi:\mathcal{S}(B)\to\mathcal{S}(B)$ and some $\kappa_b\in\{\pm 1\}$ for each $b\in B$  s.t.
            \[x=\sum_{b\in B} \kappa_b b\phi(b)^*.\]
        Take some element, $y\in L_{\mathds{Z}}(G,R)$ we can write it as
            \[y=\sum_{c\in C, m\in M} l_{c,m} cm^*,\]
        with $C$ being a finite maximal basis and $M$ a set s.t. $C,M\subseteq \mathcal{S}(B)$, $\phi^{-1}(C)=C$. We can achieve that by first expanding $C$ and $M$ until both $C$ and $M$ are subsets of $\mathcal{S}(B)$ and each element of $C$ has the same distance from its unique prefix in $B$ (which gives us $\phi^{-1}(C)=C$). As before, we can fix for each $p\in C\cup M$ the unique prefixes $b_p\in B$ and $b'_p\in C$. This allows us to get the following when multiplying $y$ by $x$.
        \begin{align*}
            xy=\sum_{c\in C, m\in M} \kappa_{\phi^{-1}(b_c)} l_{c,m} \phi^{-1}(c)m^*
        \end{align*}
        so we have $xy=y$ if and only if for each $c\in C,m\in M$, $\kappa_{\phi^{-1}(b_c)} l_{c,m}=l_{\phi^{-1}(c),m}$ and $xy=-y$ if and only if for each $c\in C,m\in M$, $\kappa_{\phi^{-1}(b_c)} l_{c,m}=-l_{\phi^{-1}(c),m}$. Since as assumed $x\in \widetilde{DU}(L_{\mathds{Z}}(G,R))$, we have for each $y$ two Leavitt path algebra elements $y_-,y_+$, with: $y=y_++y_-$, $xy_+=y_+$ and $xy_-=-y_-$.
        
        If we fix $y=b_0b_0^*$ for some $b_0\in B$, we can write $y_+,y_-$ as
        \begin{align*}
            y_+=&\sum_{c\in C, m\in M} l^+_{c,m} cm^*\text{ and }\\
            y_-=&\sum_{c\in C, m\in M} l^-_{c,m} cm^*,
        \end{align*}
        with $C\subseteq M\subseteq\mathcal{S}(B)$ and $\phi^{-1}(C)=C$. We can see that
        \[\sum_{c\in C, m\in M} (l^+_{c,m}+l^-_{c,m}) cm^*=y_++y_-=y=b_0b_0^*=\sum_{c\in C,  b_0\preceq c} cc^*.\]
        So for any $c\in C$ with $b_0\preceq c$ (note that such a $c$ must exist since $C$ is a basis) 
        \[1=l^+_{c,c}+l^-_{c,c}\]
        and for $b_0\not\preceq c$
        \[\forall m\in M,\ 0=l^+_{c,m}+l^-_{c,m}\]
        If $\phi(b_0)\neq b_0$ and $b_0\preceq c$ we also have $b_0 \not\preceq \phi^{-1}(c)$ and thus
        \[0=l^+_{\phi^{-1}(c),c}+l^-_{\phi^{-1}(c),c}.\]
        Additionally, since $xy_+=y_+$ and $xy_-=-y_-$ we must have
            \[\kappa_{\phi^{-1}(b_0)}l^+_{c,c}= l^+_{\phi^{-1}(c),c}\text{ and }\kappa_{\phi^{-1}(b_0)} l^-_{c,c}=-l^-_{\phi^{-1}(c),c}\]
        giving us
            \[\kappa_{\phi^{-1}(b_0)}=l^+_{\phi^{-1}(c),c}-l^-_{\phi^{-1}(c),c}=2l^+_{\phi(c),c}\ ,\]
        which is a contradiction since $\kappa_{\phi^{-1}(b_0)}\in\{\pm 1\}$ and $l^+_{\phi^{-1}(c),c}\in\mathds{Z}$.
        
        So we must have $\phi(b)=b$ for each $b\in B$ and therefore we have
        \[x=\sum_{b\in B} \kappa_b bb^*.\]
    \end{proof}
    Since the Higman--Thompson groups of unfolding trees are isomorphic to \[U\big(L_{\mathds{Z}}(G,S)\big)/{DU\big(L_{\mathds{Z}}(G,S)\big)}\] and the groups in this factor are both definable via a first order logical sentence (in the language of $*$-algebras) in $L_{\mathds{Z}}(G,S)$ we can see that whenever the rooted Leavitt path algebras are isomorphic, so are the Higman--Thompson groups. Additionally, since any isomorphism between two Leavitt path modules $L_{\mathds{Z}}(G)$ and $L_{\mathds{Z}}(H)$, that sends a root $R$ of $G$ to a root $S$ of $G$ also sends the rooted Leavitt path subalgebra $L_{\mathds{Z}}(G,R)$ to $L_{\mathds{Z}}(H,S)$, thus inducing an isomorphism between the Higman--Thompson groups. Note also that an isomorphism of unitary subgroups of these algebras alone does not necessarily induce an isomorphism of Higman--Thompson groups in the same way, since the definition of $DU\big(L_{\mathds{Z}}(G,R)\big)$ quantifies over all of $L_{\mathds{Z}}(G,R)$ and uses addition.
    \begin{theorem}\label{ch4-thm2}
        For any rooted graphs, $(G,R),(H,S)$ if there exists an $*$-isomorphism $\phi:L_{\mathds{Z}}(G,R)\to L_{\mathds{Z}}(H,S)$ then
            \[\mathcal{HT}\big(\mathcal{T}(G,S)\big)\cong\mathcal{HT}\big(\mathcal{T}(H,R)\big).\]
    \end{theorem}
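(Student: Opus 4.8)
The plan is to exploit the characterisation $\mathcal{HT}(\mathcal{T}(G,R)) \cong U(L_{\mathds{Z}}(G,R))/DU(L_{\mathds{Z}}(G,R))$ from \lemref{ch4-lem7} and to argue that a $*$-isomorphism $\phi \colon L_{\mathds{Z}}(G,R) \to L_{\mathds{Z}}(H,S)$ automatically carries each of the two factors of this quotient onto its counterpart, so that it descends to an isomorphism of the quotients and hence of the Higman--Thompson groups. First I would record that, being an isomorphism of unital rings, $\phi$ sends the unit to the unit, i.e. $\phi(R)=S$, and that it commutes with $*$ by hypothesis. Since the unitary group is cut out by the condition $xx^*=x^*x=1$, which only involves multiplication, the involution, and the unit, it follows immediately that $\phi$ restricts to a group isomorphism $U(L_{\mathds{Z}}(G,R)) \to U(L_{\mathds{Z}}(H,S))$.

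The essential step is to show that $\phi$ likewise carries $DU(L_{\mathds{Z}}(G,R))$ onto $DU(L_{\mathds{Z}}(H,S))$. Here I would use \lemref{ch4-lem10}, which identifies $DU(L_{\mathds{Z}}(G,R))$ with $\widetilde{DU}(L_{\mathds{Z}}(G,R))$, a subset defined by a single first-order sentence in the language of $*$-rings (with operations $+$, $\cdot$, additive inverse, the involution ${}^{*}$, and constants $0$ and $1$): membership of $x$ asserts that $x$ is unitary, that $x=x^*$, and that every $y$ splits as $y=y_++y_-$ with $xy_+=y_+$ and $xy_-=-y_-$. Because $\phi$ is an isomorphism of structures in this language, it preserves satisfaction of every first-order formula, so it maps the definable set $\widetilde{DU}(L_{\mathds{Z}}(G,R))$ bijectively onto $\widetilde{DU}(L_{\mathds{Z}}(H,S))$; combined with \lemref{ch4-lem10} applied on both sides this yields $\phi\big(DU(L_{\mathds{Z}}(G,R))\big)=DU(L_{\mathds{Z}}(H,S))$.

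Having shown that $\phi$ restricts to compatible isomorphisms $U\to U'$ and $DU\to DU'$ (the latter a normal subgroup of the former on each side, being the kernel of $\Theta$ in \lemref{ch4-lem7}), I would conclude by the standard fact that such a pair induces an isomorphism of quotient groups $U(L_{\mathds{Z}}(G,R))/DU(L_{\mathds{Z}}(G,R)) \to U(L_{\mathds{Z}}(H,S))/DU(L_{\mathds{Z}}(H,S))$, and then invoke \lemref{ch4-lem7} on both sides to obtain $\mathcal{HT}(\mathcal{T}(G,R))\cong\mathcal{HT}(\mathcal{T}(H,S))$. The only genuine subtlety --- and the reason the earlier lemmas do all the real work --- is the first-order definability of $DU$: the argument would break down if one only had an isomorphism of the unitary groups rather than of the full $*$-algebras, precisely because the defining sentence for $\widetilde{DU}$ quantifies over all algebra elements and uses addition, neither of which survives a mere group isomorphism. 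I expect no computational obstacle beyond carefully confirming that the defining condition is genuinely expressible with the available ring-with-involution operations, which \lemref{ch4-lem10} has already secured.
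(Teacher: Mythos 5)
Your proposal is correct and follows essentially the same route as the paper: a $*$-isomorphism preserves the unitary group directly, preserves $DU\big(L_{\mathds{Z}}(G,R)\big)$ via the first-order characterisation $\widetilde{DU}$ established in \lemref{ch4-lem10}, and therefore descends to an isomorphism of the quotients identified with the Higman--Thompson groups by \lemref{ch4-lem7}. Your added remark about why a mere isomorphism of unitary groups would not suffice matches the paper's own discussion preceding the theorem.
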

    \begin{proof}
        Since $\phi$ is an $*$-isomorphism, we have
        \[\phi\Big(U\big((L_{\mathds{Z}}(G,R)\big)\Big)=U\big((L_{\mathds{Z}}(H,S)\big)\] and by \lemref{ch4-lem10} \[\phi(DU((L_{\mathds{Z}}(G,R)))=DU((L_{\mathds{Z}}(H,S)).\] So by restricting and filtering through $DU\big((L_{\mathds{Z}}(G,R)\big)$ we get an isomorphism
            \[\bar{\phi}:U\big((L_{\mathds{Z}}(G,R)\big)/{DU\big((L_{\mathds{Z}}(G,R)\big)}\to U\big((L_{\mathds{Z}}(H,S)\big)/{DU\big((L_{\mathds{Z}}(H,S)\big)},\]
        which is the required isomorphism.

    \end{proof}

    We can combine our above theorem with results from \cite{ABRAMS2008}  and provide another reduction of a graph that preserves the Higman--Thompson group of its unfolding tree. 

    \begin{defn}[\cite{ABRAMS2008}, Definition 2.1]
        For any graph $G$ and any two vertices $v,w\in VG$ s.t. there exists a injection
            \[\theta:o^{-1}(w)\to o^{-1}(v) \]
        with $t(\theta(e))=t(e)$. The graph $G(w\hookrightarrow v)$ is defined by
            \begin{align*}
                VG(v\hookrightarrow w):&=VG\\
                EG(v\hookrightarrow w):&=EG\setminus \theta(o^{-1}(w))\cup \{f_{v,w}\} 
            \end{align*}
        with $o(f_{v,w})=v$ and $t(f_{v,w})=w$.
    \end{defn}

    \begin{cor}
        For any rooted graph $(G,R)$ and any $v,w\in VG$, s.t. there is an injection
            \[\theta:o^{-1}(w)\to o^{-1}(v) \]
        with $t(\theta(e))=t(e)$, we have 
            \[\mathcal{HT}\big(\mathcal{T}(G,R)\big)\cong \mathcal{HT}\Big(\mathcal{T}\big(G(w\hookrightarrow v),R\big)\Big)\]
    \end{cor}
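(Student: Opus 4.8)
The plan is to produce an explicit $*$-isomorphism between the two rooted Leavitt path algebras and then invoke \thmref{ch4-thm2}. Write $H:=G(w\hookrightarrow v)$, and recall $VH=VG$ while the edge set only changes by deleting $\theta(o^{-1}(w))$ from $v$ and adjoining the single edge $f_{v,w}$ with $o(f_{v,w})=v$, $t(f_{v,w})=w$; in particular $EH\setminus\{f_{v,w}\}=EG\setminus\theta(o^{-1}(w))$. Assuming $o^{-1}(w)\neq\emptyset$ (the move is degenerate otherwise), I would define a $*$-homomorphism $\Phi:L_{\mathds{Z}}(H)\to L_{\mathds{Z}}(G)$ on generators by fixing every vertex, fixing every common edge $e\in EG\setminus\theta(o^{-1}(w))$, and setting
\[
\Phi(f_{v,w}):=\sum_{e\in o^{-1}(w)}\theta(e)\,e^*,
\]
with $\Phi(f_{v,w}^*)=\Phi(f_{v,w})^*$. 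This is essentially the isomorphism provided by \cite{ABRAMS2008} for $\mathcal{R}=\mathds{Z}$; the point of the argument is to check it is rooted.

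The first step is to verify the defining relations, so that $\Phi$ extends to a $*$-homomorphism. Since $t(\theta(e))=t(e)$, each summand $\theta(e)e^*$ is nonzero with left vertex $v$ and right vertex $w$, so $v\,\Phi(f_{v,w})=\Phi(f_{v,w})=\Phi(f_{v,w})\,w$, mimicking an edge $v\to w$. Using injectivity of $\theta$ (giving $\theta(e)^*\theta(e')=\delta_{e,e'}t(e)$) and $e^*e'=\delta_{e,e'}t(e)$, one computes $\Phi(f_{v,w})^*\Phi(f_{v,w})=\sum_{e}ee^*=w$ by the relation at $w$, and $\Phi(f_{v,w})\Phi(f_{v,w})^*=\sum_{e}\theta(e)\theta(e)^*$. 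The only relation needing care is the one at $v$: in $H$ it reads $v=f_{v,w}f_{v,w}^*+\sum_{f}ff^*$ over the surviving out-edges $f$ of $v$, and applying $\Phi$ yields $\sum_{e}\theta(e)\theta(e)^*+\sum_{f}ff^*=\sum_{g\in o^{-1}(v)}gg^*=v$, which is exactly the relation at $v$ in $G$; every other relation involves untouched generators.

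Next I would exhibit the inverse $\Psi:L_{\mathds{Z}}(G)\to L_{\mathds{Z}}(H)$, fixing vertices and common edges and setting $\Psi(\theta(e)):=f_{v,w}\,e$ for $e\in o^{-1}(w)$ (a legitimate length-two path $v\to w\to t(e)$ in $H$). A check on generators gives $\Phi\circ\Psi=\mathrm{id}$ and $\Psi\circ\Phi=\mathrm{id}$; for instance $\Phi(\Psi(\theta(e)))=\Phi(f_{v,w})e=\sum_{e'}\theta(e')e'^*e=\theta(e)$ and $\Psi(\Phi(f_{v,w}))=\sum_{e}f_{v,w}ee^*=f_{v,w}w=f_{v,w}$. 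Hence $\Phi$ is a $*$-isomorphism of the full algebras fixing all vertices, in particular $\Phi(R)=R$.

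Finally I would descend to the rooted algebras. Since $L_{\mathds{Z}}(G,R)=R\,L_{\mathds{Z}}(G)\,R$ and likewise for $H$, and $\Phi(R)=R$, the isomorphism $\Phi$ restricts to a $*$-isomorphism $L_{\mathds{Z}}(H,R)\to L_{\mathds{Z}}(G,R)$, provided $R$ is still a root of $H$. This holds because reachability from $R$ is preserved: a target $t(e)$ formerly reached through a deleted edge $\theta(e)$ is now reached through the path $f_{v,w}e$, and $w$ through $f_{v,w}$. With a genuine $*$-isomorphism of rooted Leavitt path algebras over $\mathds{Z}$ in hand, \thmref{ch4-thm2} gives $\mathcal{HT}\big(\mathcal{T}(G,R)\big)\cong\mathcal{HT}\big(\mathcal{T}(H,R)\big)$. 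I expect the main obstacle to be exactly the bookkeeping that $\Phi$ is well-defined (the relation at $v$) together with confirming that the \cite{ABRAMS2008} isomorphism fixes $R$ and restricts to the corners; once that is noted, the conclusion is immediate, which is why routing through \thmref{ch4-thm2} shortens the argument.
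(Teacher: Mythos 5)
Your argument is correct, and it ends the same way as the paper's: obtain a $*$-isomorphism of rooted Leavitt path algebras over $\mathds{Z}$ and apply \thmref{ch4-thm2}. The genuine difference is in how that isomorphism is produced. The paper verifies nothing on generators: it cites \cite[Theorem 2.3]{ABRAMS2008} for a $*$-isomorphism $\varphi:L_{\mathds{Q}}(G)\to L_{\mathds{Q}}\big(G(w\hookrightarrow v)\big)$ fixing vertices and surviving edges and sending $\theta(e)\mapsto f_{v,w}e$ (this is exactly your $\Psi$, running in the direction opposite to your $\Phi$), and since that theorem is invoked over the field $\mathds{Q}$ it must then descend to $\mathds{Z}$, which it does by observing that $\varphi$ carries each $pq^*$ to an element of the same form $rt^*$ and hence maps the $\mathds{Z}$-span of such elements onto the corresponding $\mathds{Z}$-span. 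You instead build the map directly over $\mathds{Z}$, checking the Cuntz--Krieger relations at $v$ and $w$ and exhibiting the explicit two-sided inverse on generators; this makes the $\mathds{Q}$-to-$\mathds{Z}$ descent unnecessary, and since your computations are coefficient-free they actually yield $L_{\mathcal{R}}(G,R)\cong L_{\mathcal{R}}\big(G(w\hookrightarrow v),R\big)$ over any ring $\mathcal{R}$ --- at the cost of redoing verifications the citation supplies for free. You are also more careful than the paper about two hypotheses it leaves implicit: that $o^{-1}(w)\neq\emptyset$ (your identity $\Phi(f_{v,w})^*\Phi(f_{v,w})=w$ genuinely needs the relation $w=\sum_{e\in o^{-1}(w)}ee^*$), and that $R$ remains a root of $G(w\hookrightarrow v)$, without which \thmref{ch4-thm2} does not apply. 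One cosmetic remark: the paper restricts to the rooted subalgebras via $\varphi(R)=R$ and the spanning set description rather than your corner identity $L_{\mathds{Z}}(G,R)=R\,L_{\mathds{Z}}(G)\,R$, but the latter is valid (it follows from the fact that $\{pq^*\mid T(p)=T(q)\}$ spans $L_{\mathds{Z}}(G)$ and that multiplying by $R$ on both sides kills every $pq^*$ with $O(p)\neq R$ or $O(q)\neq R$), so that step of yours is fine as well.
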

    \begin{proof}
        We will define for any $e\in o^{-1}(w)$
            \[r_e:=f_{v,w}e\in L_{\mathds{Q}}\big(G(w\hookrightarrow v)\big).\]
        By \cite[Theorem 2.3.]{ABRAMS2008} we have a linear $*$-isomorphism $\varphi: L_{\mathds{Q}}(G)\to L_{\mathds{Q}}\big(G(w\hookrightarrow v)\big)$, with 
        \begin{itemize}
            \item \(\forall v\in VG,\ \varphi(v)=v,\)
            \item \(\forall e\in EG\setminus\theta\big(o^{-1}(w)\big),\ \varphi(e)=e\)\text{ and }
            \item  \(\forall e\in o^{-1}(w),\ \varphi\big(\theta(e)\big)=r_e.\)
        \end{itemize}
        Since $\varphi(R)=R$ we must have $\varphi\big(L_{\mathds{Q}}(G,R)\big)=L_{\mathds{Q}}\big(G(w\hookrightarrow v),R\big)$. Additionally, since $\varphi$ is $\mathds{Q}$-linear and for any paths $p,q$ in $G$ $\varphi(pq^*)$ is of the form $rt^*$ for some paths $r,t$ in $G(w\hookrightarrow v)$, we must have $\varphi\big(L_{\mathds{Z}}(G)\big)=L_{\mathds{Z}}\big(G(w\hookrightarrow v)\big)$.

        Combining these two observations gives us that the restriction $\varphi|_{L_{\mathds{Z}}(G,R)}$ gives us an isomorphism from $L_{\mathds{Z}}(G,R)$ to $L_{\mathds{Z}}\big(G(w\hookrightarrow v),R\big)$. So we can apply \thmref{ch4-thm2} and get the required isomorphism. 
    \end{proof}

    As another application of \lemref{ch4-thm2} we can define a simple way of modifying the graph, that preserve the rooted Leavitt path algebra and thus the Higman--Thompson groups.
    \begin{defn}
        For any rooted graph $(G,R)$ and any vertex $R\neq v\in VG$, with $o^{-1}(v)\cap t^{-1}(v)=\emptyset$ (i.e. no loops based on $v$), we define the graph $G_v$ with vanished vertex $v$ as follows:
        \begin{align*}
            &VG_v:= VG\setminus\{v\}\\
            &EG_v:= \{e\in EG\mid t(e),o(e)\neq v\}\cup\{(e_1,e_2)\in EG^2\mid t(e_1)=o(e_2)=v\}\\
            &\forall (e_1,e_2)\in EG_v\cap EG^2\ o_{G_v}\big((e_1,e_2)\big):=o_G(e_1),\ t_{G_v}\big((e_1,e_2)\big):=t_G(e_2)\\
            &\forall e\in EG_v\cap EG\ o_{G_v}(e):=o_G(e),\ t_{G_v}(e):=t_G(e)\\
        \end{align*}
    \end{defn}
    \begin{figure}
        \begin{subfigure}[b]{0.5\textwidth}
            \begin{tikzpicture}
            \node[big red node]            (v0)  at (0,3)              {};
            \node[big green node]                 (vxx) at (-1,1)             {};
            \node[big blue node]              (vxy) at (1,1)              {};
            \node[big yellow node]             (vyy) at (1,-1)               {};
            \path[-latex]
                    (vxx)           edge[loop, out= 90, in= 180, looseness=15]  node[right]             {}              (vxx)
                    (vxx)           edge[bend left]                             node[above]             {}           (vxy)
                    (vxy)           edge[bend left]                             node[right]             {}              (vyy)
                    (vyy)           edge[bend left]                             node[below]             {}           (vxx)
                    (vyy)           edge[bend left]                             node[left]              {}           (vxy)
                    (v0)            edge[]                                      node[left]              {}           (vxx)
                    (v0)            edge[]                                      node[right]             {}           (vxy)
                    
                    ;
        \end{tikzpicture}
        \vspace{1cm}
        \end{subfigure} 
        \begin{subfigure}[b]{0.5\textwidth}
            \begin{tikzpicture}
            \node[big red node]            (v0)  at (0,3)              {};
            \node[big green node]                 (vxx) at (-1,1)             {};
            \node[big yellow node]             (vyy) at (1,-1)               {};
            \path[-latex]
                    (vxx)           edge[loop, out= 90, in= 180, looseness=15]  node[above]             {}           (vxx)
                    (vxx)           edge[bend left]                             node[above]             {}           (vyy)
                    (vyy)           edge[bend left]                             node[below]             {}          (vxx)
                    (vyy)           edge[loop, out= 270, in= 360, looseness=15] node[below]             {}           (vyy)
                    (v0)            edge[]                                      node[left]              {}           (vxx)
                    (v0)            edge[bend left]                             node[right]             {}           (vyy)
                    ;
        \end{tikzpicture}
        \end{subfigure}
        \caption{Vanishing the blue vertex}
        \label{fig:placeholder}
    \end{figure}
    We will call going from $G$ to $G_v$: \textbf{vanishing a vertex}. The inverse of vanishing will be called \textbf{pinching}. We can show that this move preserves the rooted Leavitt path algebra (although not necessarily the unrooted Leavitt path algebra).
    \begin{lemma}\label{ch4-lem11}
        For any ring $\mathcal{R}$, any rooted graph $(G,R)$ and any vertex $R\neq v\in VG$, with $o^{-1}(v)\cap t^{-1}(v)=\emptyset$, we have 
            \[L_{\mathcal{R}}(G,R)\cong L_{\mathcal{R}}(G_v,R).\]
    \end{lemma}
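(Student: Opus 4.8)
The plan is to build an explicit $*$-homomorphism out of $L_{\mathcal{R}}(G_v)$ via the universal property, restrict it to the rooted subalgebra, and then show it is a bijection using the linear-independence machinery of \lemref{ch4-lem2} and \lemref{ch4-lem2.1}. Concretely, I would define $\varphi\colon L_{\mathcal{R}}(G_v)\to L_{\mathcal{R}}(G)$ on generators by $\varphi(w)=w$ for $w\in VG\setminus\{v\}$, $\varphi(e)=e$ for each surviving edge $e\in EG$ with $o(e),t(e)\neq v$, and $\varphi\big((e_1,e_2)\big)=e_1e_2$ for each composite edge, extended $*$-compatibly by $\varphi((e_1,e_2)^*)=e_2^*e_1^*$. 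To invoke the universal property I must check that these images satisfy the four defining relations of $L_{\mathcal{R}}(G_v)$ inside $L_{\mathcal{R}}(G)$; the $f^*f'$ and $o(f)f=f$ type relations between composite and surviving edges follow from the prefix identities collected in Section~\ref{ch4-prop1}, while the only subtle point is the Cuntz--Krieger relation $w=\sum_{f\in o_{G_v}^{-1}(w)}ff^*$. Splitting $o_{G_v}^{-1}(w)$ into surviving edges and composite edges $(e_1,e_2)$ with $o(e_1)=w$, I would compute
\[
\sum_{f\in o_{G_v}^{-1}(w)}\varphi(f)\varphi(f)^*=\!\!\sum_{\substack{o(e)=w\\ t(e)\neq v}}\!\!ee^*+\!\!\sum_{\substack{o(e_1)=w\\ t(e_1)=v}}\!\!e_1\Big(\sum_{o(e_2)=v}e_2e_2^*\Big)e_1^*=\!\!\sum_{o(e)=w}\!\!ee^*=w,
\]
where the middle sum collapses because $\sum_{o(e_2)=v}e_2e_2^*=v$ (the relation at the vanished vertex) and $e_1v=e_1$.

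Since $\varphi$ sends every path of $G_v$ issuing from $R$ to a path of $G$ issuing from $R$, it carries the generating set $\{pq^*\}$ of $L_{\mathcal{R}}(G_v,R)$ into $L_{\mathcal{R}}(G,R)$, hence restricts to a $*$-homomorphism $\varphi\colon L_{\mathcal{R}}(G_v,R)\to L_{\mathcal{R}}(G,R)$ (and one checks in passing that $R$ remains a root of $G_v$). The combinatorial heart is that, since $v$ carries no loops and $v\neq R$, every visit of an $R$-path to $v$ is an isolated crossing $e_1e_2$ with $t(e_1)=o(e_2)=v$; contracting each such crossing to its composite edge sets up a length- and $\preceq$-preserving bijection between $\mathcal{W}(G_v,R)$ and the $R$-paths of $G$ that do \emph{not} terminate at $v$, under which $\varphi(p)$ is exactly the expansion $\widetilde p$ of $p$. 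In particular this bijection carries any finite maximal basis $B'$ of $\mathcal{T}(G_v,R)$ to a finite maximal basis $\widetilde{B'}$ of $\mathcal{T}(G,R)$, because every $R$-path of $G$ (whether or not it ends at $v$) still has a prefix of the form $\widetilde b$.

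For injectivity I would take $x\in L_{\mathcal{R}}(G_v,R)$, choose $B'$ refining the left-hand factors of $x$ so that \lemref{ch4-lem2.1} writes $x=\sum_{b\in B',\,p}c_{b,p}\,bp^{*}$, and observe that $\varphi(x)=\sum c_{b,p}\,\widetilde b\,\widetilde p^{\,*}$ is a combination of the linearly independent family of \lemref{ch4-lem2} attached to the basis $\widetilde{B'}$; hence $\varphi(x)=0$ forces every $c_{b,p}=0$, so $x=0$. For surjectivity it suffices to hit each generator $pq^*$ of $L_{\mathcal{R}}(G,R)$: if $T(p)=T(q)\neq v$ then $p,q$ contract to $G_v$-paths and $pq^*$ lies in the image directly, while if $T(p)=T(q)=v$ I would write $pq^*=\sum_{e\in o^{-1}(v)}(pe)(qe)^*$ using $v=\sum_{e\in o^{-1}(v)}ee^*$, and since the no-loop hypothesis forces $t(e)\neq v$, each $(pe)(qe)^*$ is of the first type and so in the image. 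I expect the main obstacle to be precisely this last step: the rewriting of elements ending at $v$ relies on $o^{-1}(v)\neq\emptyset$, so the argument genuinely needs $v$ to emit at least one edge (a sink $v$ shared by several predecessors really does change the algebra), and some care is required to confirm that the contraction bijection respects the basis structure fed into both independence lemmas.
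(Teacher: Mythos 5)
Your proposal is correct, but it runs in the opposite direction to the paper's proof. The paper defines the contraction $p\mapsto p_v$ on the paths of $G$ not terminating at $v$, observes that $\{pq^*\mid T(p)=T(q)\neq v\}$ spans $L_{\mathcal{R}}(G,R)$ (via the same rewriting $pq^*=\sum_{e\in o^{-1}(v)}pe(qe)^*$ that you use for surjectivity), and asserts that the assignment $pq^*\mapsto p_vq_v^*$ induces a $*$-homomorphism ``by straightforward checking''; injectivity is then obtained essentially as you obtain it, from the fact that contraction carries finite maximal bases to finite maximal bases together with the linear independence of \lemref{ch4-lem2}. What your route buys is exactly the point the paper glosses over: defining a map on the spanning set $\{pq^*\}$ requires verifying compatibility with the relations among these monomials (the delicate part of the ``straightforward checking''), whereas your universal-property construction of $\varphi\colon L_{\mathcal{R}}(G_v)\to L_{\mathcal{R}}(G)$ makes well-definedness automatic once the defining relations are verified on generators --- and your Cuntz--Krieger computation at $w$, collapsing the composite edges through $\sum_{o(e_2)=v}e_2e_2^*=v$, is the correct and only nontrivial check. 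The cost is that you must pass through the full unrooted algebras and then restrict to the rooted corner, which the paper's direct definition of $\phi\colon L_{\mathcal{R}}(G,R)\to L_{\mathcal{R}}(G_v,R)$ avoids.

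Two smaller points. First, your contraction/expansion bijection is not \emph{length}-preserving (each crossing of $v$ contracts two edges to one); only preservation of the prefix order $\preceq$ holds, and that is all your basis argument needs. Relatedly, a short path of $G$ need not have a prefix of the form $\widetilde{b}$; what maximality of $\widetilde{B'}$ requires, and what is true, is comparability with some $\widetilde{b}$ --- for a path ending at $v$ one extends by an edge of $o^{-1}(v)$ (which leaves $v$, since $v$ carries no loops) and compares the contraction of the extension with $B'$. Second, your closing caveat is genuine and applies equally to the paper's own proof: if $o^{-1}(v)=\emptyset$ then both the Cuntz--Krieger collapse at $v$ and the rewriting $pq^*=\sum_{e\in o^{-1}(v)}pe(qe)^*$ are unavailable, and the statement in fact fails (vanishing a sink $v$ deletes the nonzero projections $pp^*$ with $T(p)=v$, e.g.\ producing an extra direct summand $\mathcal{R}\,ee^*$ that $L_{\mathcal{R}}(G_v,R)$ lacks). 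So the hypothesis $o^{-1}(v)\cap t^{-1}(v)=\emptyset$ must implicitly be read together with $o^{-1}(v)\neq\emptyset$; you were right to flag this as the load-bearing assumption rather than a technicality of your particular construction.
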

    \begin{proof}
        We can define for every path $p=e_1\dots e_n\in\mathcal{W}(G,R)$ such that $T(p)\neq v$ the path $p_v\in \mathcal{W}(G_v,R)$ recursively over the length of $p$ as follows. Firstly $(\varepsilon_R)_v:=\varepsilon_R$. If $o(e_{n})\neq v$ we simply set $p_v=(e_1\dots e_{n-1})_ve_n$. If $o(e_{n})=v$ we have some path $q\in\mathcal{W}(G,R)$ s.t. $p=qe_{n-1}e_n$. Note that since $t(e_{n-1})=o(e_n)=v$, we must have $T(q)=o(e_{n-1})\neq v$ and thus $q_v$ is defined by the prior consideration. We can therefore define $p_v:=q_v(e_{n-1},e_{n})$. 

        Additionally, we note that $\{pq^*\mid T(p)=T(q)\neq v\}$ spans $L_{\mathcal{R}}(G,R)$ since for any $p,q\in \mathcal{W}(G,R)$ with $T(p)=T(q)=v$ we have
            \[pq^*=\sum_{e\in o^{-1}(v)}pe(qe)^*\]
        and for any $e\in o^{-1}(v)$, $T(pe)=T(qe)=t(e)\neq v$.
        So the assignment $pq^*\mapsto p_vq_v^*$ for any paths $p,q$ that don't end in $v$, induces a function $\phi:L_{\mathcal{R}}(G,R)\to L_{\mathcal{R}}(G_v,R)$. By straightforward checking we can see that it is in fact a $*$-homomorphism. $\phi$ is surjective since for any $r\in\mathcal{W}(G_v,R)$ there exist some $p\in\mathcal{W}(G,R)$ such that $p_v=r$. To see that the function is injective we first note that $p\neq q$ implies $p_v\neq q_v$. Therefore if we have a finite maximal basis $B$ and a finite set $M$ with $v\notin T(B)\cup T(M)$ and values $k_{p,q}\in\mathcal{R}$ for any pair $(p,q)\in B\times M$ with $T(p)=T(q)$ such that 
            \[0=\phi\Big(\sum_{\substack{p\in B,q\in M\\ T(p)=T(q)}}k_{p,q} pq^*\Big)=\sum_{\substack{p\in B,q\in M\\ T(p)=T(q)}}k_{p,q} p_vq_v^*,\]
        we must have $k_{p,q}=0$ for any $p\in B,q\in M$, as $B_v:=\{p_v\mid p\in B\}$ is also a basis.
    \end{proof}
    Immediately we therefore have the following.
    \begin{theorem}
        For any rooted graph $(G,R)$ and any $R\neq v\in VG$ with $o^{-1}(v)\cap t^{-1}(v)=\emptyset$, we have
            \[\mathcal{HT}\big(\mathcal{T}(G,R)\big)\cong \mathcal{HT}\big(\mathcal{T}(G_v,R)\big).\]
    \end{theorem}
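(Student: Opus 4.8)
The plan is to derive this statement as an immediate consequence of the two results that directly precede it: the algebra-level isomorphism furnished by \lemref{ch4-lem11} and the group-theoretic transfer principle of \thmref{ch4-thm2}. Since \lemref{ch4-lem11} is stated for an arbitrary ring $\mathcal{R}$, I would instantiate it at $\mathcal{R}=\mathds{Z}$, which is precisely the coefficient ring over which \thmref{ch4-thm2} operates. The hypotheses of \lemref{ch4-lem11}, namely that $R\neq v$ and $o^{-1}(v)\cap t^{-1}(v)=\emptyset$, coincide verbatim with those of the present statement, so no extra work is needed to invoke it.

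First I would confirm that $(G_v,R)$ is itself a rooted graph, i.e. that $R$ remains a root of $G_v$; this is the only genuinely new point, and it is short. Every vertex $w\neq v$ reachable from $R$ by a directed path in $G$ is still reachable in $G_v$, since any occurrence of the vanished vertex $v$ along such a path appears as a consecutive pair of edges $e_1e_2$ with $t(e_1)=o(e_2)=v$, and this pair is replaced in $G_v$ by the single composite edge $(e_1,e_2)\in EG_v$. Hence $(G_v)_R=G_v$ and $(G_v,R)$ is a legitimate rooted graph to which \thmref{ch4-thm2} applies.

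With this in hand the argument closes in one step. By \lemref{ch4-lem11} applied over $\mathds{Z}$ there is a $*$-isomorphism $\varphi:L_{\mathds{Z}}(G,R)\to L_{\mathds{Z}}(G_v,R)$ (recall that the map built there is checked to be a $*$-homomorphism and is bijective, hence a $*$-isomorphism). Taking $(H,S):=(G_v,R)$ in \thmref{ch4-thm2} then yields
\[
\mathcal{HT}\big(\mathcal{T}(G,R)\big)\cong \mathcal{HT}\big(\mathcal{T}(G_v,R)\big),
\]
as desired. I expect no real obstacle here: the entire content of the theorem has already been absorbed into \lemref{ch4-lem11} (the technical core, namely the explicit path-rewriting isomorphism $pq^*\mapsto p_vq_v^*$ together with the verification that $B_v$ remains a basis) and into \thmref{ch4-thm2} (the first-order definability of $DU$ inside $U$ that makes the quotient $U/DU\cong\mathcal{HT}$ transport along $*$-isomorphisms). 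The role of this final theorem is simply to package those two facts together, so I would keep the proof to the two sentences above rather than re-deriving either ingredient.
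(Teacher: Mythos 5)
Your proposal is correct and matches the paper's proof exactly: the paper's argument is precisely the one-line combination of \lemref{ch4-lem11} (at $\mathcal{R}=\mathds{Z}$) with \thmref{ch4-thm2}. Your added check that $R$ remains a root of $G_v$ is a small point the paper leaves implicit, and it is verified correctly.
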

    \begin{proof}
        This follows from combining \thmref{ch4-thm2} and \lemref{ch4-lem11}.
    \end{proof}
    Note that each of the graph moves mentioned in \cite[Appendix 3]{abrams2015leavitt} that preserve the Leavitt path algebra can be achieved by pinching and vanishing. This naturally begs whether all isomorphisms between rooted Leavitt path algebras can be realized with these moves.  
    \section{Acknowledgments}
        I would like to thank my supervisors: George Willis, Stephan Tornier and  Colin Reid for their advice and review while writing this paper. I would also like to thank the ARC for funding the research that led to this paper via the grant no. FL170100032. This paper was adapted from the last chapter of my thesis that has been submitted for review.
    \bibliographystyle{plain}
    \bibliography{bibliography}
    \end{document}